\def\arraypar#1{\parbox[c]{\textwidth - 2cm}{\centering #1}}
\def\clap#1{\hbox to 0pt{\hss#1\hss}}
\makeatletter \@addtoreset{equation}{section}
\makeatletter \@addtoreset{enunciato}{section}
\newcounter{enunciato}[section]
\newtheorem{ittheorem}{Theorem}
\newtheorem{itlemma}{Lemma}
\newtheorem{itproposition}{Proposition}
\newtheorem{itdefinition}{Definition}
\newtheorem{itremark}{Remark}
\newtheorem{itclaim}{Claim}
\newtheorem{itfact}{Fact}
\newtheorem{itconjecture}{Conjecture}
\newtheorem{itcorollary}{Corollary}
\newtheorem{itexample}{Example}
\newenvironment{theorem}{\addtocounter{enunciato}{1}
\begin{ittheorem}}{\end{ittheorem}}
\newenvironment{lemma}{\addtocounter{enunciato}{1}
\begin{itlemma}}{\end{itlemma}}
\newenvironment{proposition}{\addtocounter{enunciato}{1}
\begin{itproposition}}{\end{itproposition}}
\newenvironment{definition}{\addtocounter{enunciato}{1}
\begin{itdefinition}}{\end{itdefinition}}
\newenvironment{remark}{\addtocounter{enunciato}{1}
\begin{itremark}}{\end{itremark}}
\newenvironment{conjecture}{\addtocounter{enunciato}{1}
\begin{itconjecture}}{\end{itconjecture}}
\newenvironment{corollary}{\addtocounter{enunciato}{1}
\begin{itcorollary}}{\end{itcorollary}}
\newenvironment{example}{\addtocounter{enunciato}{1}
\begin{itexample}}{\end{itexample}}
\newcommand{\be}[1]{\begin{equation}\label{#1}}
\newcommand{\ee}{\end{equation}}
\newcommand{\bl}[1]{\begin{lemma}\label{#1}}
\newcommand{\el}{\end{lemma}}
\newcommand{\br}[1]{\begin{remark}\label{#1}}
\newcommand{\er}{\end{remark}}
\newcommand{\bt}[1]{\begin{theorem}\label{#1}}
\newcommand{\et}{\end{theorem}}
\newcommand{\bd}[1]{\begin{definition}\label{#1}}
\newcommand{\ed}{\end{definition}}
\newcommand{\bp}[1]{\begin{proposition}\label{#1}}
\newcommand{\ep}{\end{proposition}}
\newcommand{\bc}[1]{\begin{corollary}\label{#1}}
\newcommand{\ec}{\end{corollary}}
\newcommand{\bcj}[1]{\begin{conjecture}\label{#1}}
\newcommand{\ecj}{\end{conjecture}}
\newcommand{\bpr}{\begin{proof}}
\newcommand{\epr}{\end{proof}}
\def\Z{\mathbb{Z}}
\def\N{\mathbb{N}}
\def\R{\mathbb{R}}
\def\P{\mathbb{P}}
\def\E{\mathbb{E}}
\def \ba {\begin{array}}
\def \ea {\end{array}}
\def \P  {{\mathbb P}}
\def \E  {{\mathbb E}}
\DeclareSymbolFont{symbolsC}{U}{pxsyc}{m}{n}
\DeclareMathSymbol{\opentimes}{\mathrel}{symbolsC}{93}
\newcounter{constant}
\newcolumntype{e}{>{\displaystyle}r @{\,} >{\displaystyle}c @{\,} >{\displaystyle}l}
\begin{document}

\title{Zero-one law for directional transience of one-dimensional random walks in dynamic random environments}
\author{Tal Orenshtein and Renato S.\ dos Santos}

\maketitle

\begin{abstract}
We prove the trichotomy between transience to the right, transience to the left and recurrence
of one-dimensional nearest-neighbour random walks in dynamic random environments under fairly general assumptions,
namely: stationarity under space-time translations, ergodicity under spatial translations,
and a mild ellipticity condition.
In particular, the result applies to general uniformly elliptic models
and also to a large class of non-uniformly elliptic cases that are i.i.d.\ in space and Markovian in time.
An immediate consequence is the recurrence of models that are symmetric with respect to reflection through the origin.

\vspace{0.5cm}
\noindent {\it MSC} 2010.
Primary 60F20, 60K37; Secondary 82B41, 82C44.\\
{\it Key words and phrases.}
Random walk, dynamic random environment, zero-one law, directional transience, recurrence.
\end{abstract}

\section{Introduction}
\label{s:intro}

Random walks in random environments have been the subject of intensive mathematical study for several decades.
They consist of random walks whose transition kernels are themselves random, modelling the movement
of a tracer particle in a disordered medium. When the random transition kernels, called \emph{random environment},
 do not evolve with time, the model is called \emph{static}; otherwise it is called \emph{dynamic}.
Hereafter we will use the abbreviations RWRE for the static and RWDRE for the dynamic model.
The reader is referred to the monographs \cite{Sz06}, \cite{Ze04} for RWRE
and \cite{Avthesis}, \cite{dSathesis} for RWDRE.
Note that, by considering time as an additional dimension, one-dimensional RWDRE can be seen as directed RWRE in two dimensions.

While one-dimensional RWRE is by now very well understood,
the state of the art in RWDRE is in comparison much more modest.
Most of the general results available require strong assumptions such as uniform and fast enough mixing
for the random environment, cf.\ e.g.\ \cite{AvdHoRe11}, \cite{dHodSaSi13}, \cite{ReVo11}.
An exception are quenched LDPs, cf.\ \cite{AvdHoRe10}, \cite{CDRRS12}, \cite{RaSeYi12}.
Otherwise, outside of the uniformly-mixing class, the literature is largely restricted to
particular choices of random environments, cf.\ e.g.\ \cite{HHSST14}, \cite{dHodSa13}, \cite{HS14}, \cite{MoVa15}, \cite{dSa14}.

In the present paper we consider the very basic question of whether the trichotomy between transience to the right,
transience to the left and recurrence,
typical for time-homogeneous Markov chains on $\Z$, also holds for one-dimensional, nearest-neighbour RWDRE.
We conclude that this is indeed the case under fairly general assumptions on the random environment.
An immediate but interesting consequence is that reflection-symmetric models satisfying our assumptions must be recurrent.
We will consider the continuous time setting, but the same arguments work, mutatis mutandis, in the discrete time case.
For comparison with other non-Markovian models where this problem was addressed, the reader is referred to \cite{zerner2001zero} and \cite{zerner2007zero} for the case of 2-dimensional RWRE, \cite{sabot2011reversed} for the case of random walks in Dirichlet environments, and to \cite{amir2013zero} for the case of 1-dimensional excited random walk.

The paper is organised as follows. In Section~\ref{s:results}
we define our model and state our assumptions and results.
Section~\ref{s:examplesanddiscussion} discusses our setup,
providing examples and connections to the literature;
the proof that a class of examples described therein
fits our setting is postponed to Section~\ref{s:proofofthmexamples}.
In Section~\ref{s:construction}, we present a graphical construction
that will be useful in Section~\ref{s:proofmainthm},
where our main theorem is proved.

\section{Model, assumptions and results}
\label{s:results}

Let $\omega = (\omega^-_t, \omega^{+}_t)_{t \ge 0}$ be a stochastic process taking values on $([0,\infty)^{\Z})^2$, called
the \emph{dynamic random environment}.
We will assume that $\omega$ belongs to the space $\Omega$ of right-continuous paths from $[0,\infty)$ to $([0,\infty)^{\Z})^2$,
where the latter is endowed with the product topology.
Given a realisation of $\omega$, the RWDRE $X = (X_t)_{t \ge 0}$ is defined as the time-inhomogeneous
Markov jump process on $\Z$ whose laws $(P^\omega_x)_{x \in \Z}$ satisfy
\begin{align}\label{e:defX}
P^\omega_x \left(X_0=x \right) & = 1, \\
P^\omega_x\left(X_{t+s} = y \pm 1 \,\middle|\, X_t = y \right) & = s \, \omega^{\pm}_t(y) + o(s) \; \text{ as } s \downarrow 0.
\end{align}
The existence of such processes is standard (see e.g.\ \cite{EK86}, Chapter 4, Section 7).
We give here a particular construction in Section~\ref{s:construction} below.
Without extra assumptions the process $X$ may explode (i.e., make infinitely many jumps) in finite time;
we thus enlarge the state-space $\Z$ with a cemetery point $\Delta$ in the standard way in order to define $X$ after the explosion time $\tau_\Delta$,
i.e., $X_t := \Delta$ for all $t \ge \tau_\Delta$ (cf.\ \eqref{e:deftau_Delta}).

The law $P^\omega_x$ is called the \emph{quenched} law.
We denote by $\P_x$ the joint law of $X$ and $\omega$ (with $\P_x(X_0=x)=1$).
The corresponding expectations will be denoted respectively by $E^\omega_x$ and $\E_x$.
In the literature, the \emph{annealed} (or \emph{averaged}) law is often defined as the marginal law of $X$
under $\P_x$, but for convenience we will call $\P_x$ itself the annealed law.

Define the space-time translation operators $\theta^z_{s}:\Omega \to \Omega$, $z \in \Z, s \in \R_+$,
acting on $\omega$ as $(\theta^z_s \omega)_{x,t} := \omega(z+x,s+t)$.
We will write $\theta_s:=\theta^0_s$, $\theta^z:=\theta^z_0$.
Denoting by
\[\mathcal{F}_t:=\sigma(\omega, (X_u)_{0\le u \le t})\]
the natural filtration of $X$,
the Markov property for $X$ then reads
\begin{equation}\label{e:MPX}
E^{\omega}_x \left[ f \left((X_{t+s})_{s \ge 0} \right) \;\middle|\; \mathcal{F}_t \right]
= E_{X_t}^{\theta_t \omega} \left[ f(X)\right] \;\;\; P^\omega_x \text{-a.s.}
\end{equation}
for any bounded measurable $f$ and any $t \ge 0$.
Moreover, since the space-time process $(X_t,t)$ is Feller,
by the strong Markov property the time $t$ in \eqref{e:MPX} may be replaced by any a.s.\ finite $\mathcal{F}_t$-stopping time.
Also, we may and will assume that $X$ is right-continuous.

We will work under the following assumptions:

\textbf{(SE):}
\label{assumptionSE}
The process $\omega$ is stationary with respect to space-time translations, i.e.,
for each $z \in \Z, t \ge0$, $\theta^z_{t} \omega$ has the same distribution as $\omega$.
Furthermore, we assume that $\omega$ is ergodic with respect to the spatial translations $\theta^z$.

\textbf{(EL):} $\P_0$-a.s.,
\begin{equation}\label{e:EL}
\liminf_{t \to \tau_\Delta}{X_t} \;\textnormal{ and }\; \limsup_{t \to \tau_\Delta}{X_t} \in \{-\infty, + \infty\}.
\end{equation}

Assumption (SE) is standard; in fact, $\omega$ is usually taken ergodic also in time.
Assumption (EL) is an ellipticity condition;
note that it holds e.g.\ when $\omega$ is \emph{uniformly elliptic}, i.e.,
if there exists $\kappa \in (0,1)$ such that $\kappa \le \omega^{\pm}_t(x) \le \kappa^{-1}$.
Indeed, in this case the property of being visited infinitely often is either a.s.\ satisfied by all or by none of the points of $\Z$.
Note that (EL) implies
\begin{equation}\label{e:notstuck}
\inf \{ t > 0 \colon\, X_t \in [-n,n]^c \} < \tau_\Delta \;\;\;\; \P_0 \text{-a.s.\ for all } n \in \N.
\end{equation}
While (EL) may be hard to check in non-uniformly elliptic examples, \eqref{e:notstuck}
holds as soon as $\omega$ is stationary and ergodic with respect to time translations
and satisfies a non-degeneracy condition; see Proposition~\ref{prop:notstuck} below.

We can now state our main result.
\begin{theorem}
\label{thm:01law}
If assumptions (SE) and (EL) are satisfied, then
$\tau_\Delta = \infty$ $\P_0$-a.s.\ and one of the following three cases holds:
\begin{enumerate}
\item $\displaystyle \P_0\left(\lim_{t \to \infty} X_t = \infty\right)=1$ \label{transright};
\item $\displaystyle \P_0\left(\lim_{t \to \infty} X_t = -\infty \right)=1$ \label{transleft};
\item $\displaystyle \P_0 \left( \limsup_{t \to \infty} X_t = \infty = - \liminf_{t \to \infty} X_t \right) =1$. \label{rec}
\end{enumerate}
\end{theorem}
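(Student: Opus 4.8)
The plan is to reduce the trichotomy to two scalar zero-one statements and one non-explosion statement. Granting for the moment that $\tau_\Delta = \infty$ $\P_0$-a.s.\ (addressed at the end), set $R := \P_0(\limsup_t X_t = +\infty)$ and $L := \P_0(\liminf_t X_t = -\infty)$. By (EL) the pair $(\liminf_t X_t, \limsup_t X_t)$ lives in $\{-\infty,+\infty\}^2$, and since $\liminf \le \limsup$ the three events in the statement are exactly $\{\liminf_t X_t = +\infty\}$, $\{\limsup_t X_t = -\infty\}$ and $\{\liminf_t X_t = -\infty,\ \limsup_t X_t = +\infty\}$. Hence it suffices to prove $R\in\{0,1\}$ and $L\in\{0,1\}$: the case $R=L=1$ gives recurrence, $R=1,L=0$ transience to the right, $R=0,L=1$ transience to the left, while $R=L=0$ would force $\limsup_t X_t=-\infty$ and $\liminf_t X_t=+\infty$ at once and is ruled out by (EL).

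The heart of the argument is $R\in\{0,1\}$; the statement for $L$ is obtained by the reflection $x\mapsto -x$, which swaps $\omega^+\leftrightarrow\omega^-$ and preserves (SE) and (EL). I would use the graphical construction of Section~\ref{s:construction} to realise, on one probability space with law $\mathbf{Q}$, the environment $\omega$ together with a family of jump clocks that is i.i.d.\ in space and independent of $\omega$, and to build simultaneously the walks $X^z$ started from every $z\in\Z$ from these common clocks. Because the walks are nearest-neighbour, two walks occupying the same site jump together, and a.s.\ no two jumps coincide, this coupling is monotone: $X^z_t \le X^{z'}_t$ for all $t\ge 0$ whenever $z\le z'$ (ordered walks cannot cross without first meeting, after which they coalesce). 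Consequently the events $B_z := \{\limsup_t X^z_t = +\infty\}$ are nondecreasing in $z$, while the $\mathbf{Q}$-marginal of $X^z$ is the annealed law $\P_z$, so by the spatial stationarity in (SE), $\mathbf{Q}(B_z) = \P_z(\limsup_t X_t = +\infty) = R$ for every $z$.

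Since the $B_z$ increase in $z$ yet share the common probability $R$, we get $\mathbf{Q}(B_{z+1}\setminus B_z)=0$, so there is a single event $B$ with $B_z = B$ $\mathbf{Q}$-a.s.\ for all $z$. Let $\Theta$ denote the spatial shift acting jointly on $\omega$ and on the clock field; the construction gives $X^z\circ\Theta = X^{z+1}-1$, hence $\Theta^{-1}B_z = B_{z+1}$, and combined with $B_z=B$ a.s.\ this yields $\Theta^{-1}B = B$, i.e.\ $B$ is $\Theta$-invariant. Now $\mathbf{Q}$ is a direct product of the law of $\omega$, which is ergodic under spatial translations by (SE), and the i.i.d.\ clock field, whose spatial shift is Bernoulli and hence mixing; a product of an ergodic and a mixing transformation is ergodic, so $\Theta$ is ergodic under $\mathbf{Q}$. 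Therefore $R=\mathbf{Q}(B)\in\{0,1\}$, and symmetrically $L\in\{0,1\}$, which completes the trichotomy as reduced in the first paragraph.

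It remains to prove $\tau_\Delta=\infty$ $\P_0$-a.s., and I expect this to be the main obstacle, precisely because the admissible models need not be uniformly elliptic and the jump rates may be unbounded. The plan is to argue by contradiction: on $\{\tau_\Delta<\infty\}$, (EL) forces $X_t\to+\infty$ or $X_t\to-\infty$ as $t\uparrow\tau_\Delta$; in the former case the walk makes a first visit to every large level $k$, at times $\tau_k\uparrow\tau_\Delta$, so that $\sum_k \xi_k \le \tau_\Delta<\infty$, where $\xi_k$ is the holding time after the first arrival at $k$. One must then show this sum is a.s.\ infinite, which amounts to controlling the total rate $\omega^+_{\tau_k}(k)+\omega^-_{\tau_k}(k)$ at the space-time points where the walk first arrives; these points are trajectory-biased, so the environment seen there is not stationary, and that is the delicate step. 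I would treat it via a first-moment/stationarity estimate at level crossings based on the space-time stationarity in (SE) together with \eqref{e:notstuck}, reducing the unbounded-rate case to a summability statement for a stationary sequence. In the uniformly elliptic case this is immediate: with $\omega^\pm\le\kappa^{-1}$ each $\xi_k$ stochastically dominates an independent $\mathrm{Exp}(2\kappa^{-1})$ variable, whose sum diverges.
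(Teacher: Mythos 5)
Your zero-one-law argument is correct and takes a genuinely different route from the paper for that part of the statement. The paper proves the trichotomy through three lemmas: Lemma~\ref{l:finmanyrexc} (Birkhoff's theorem applied to the events $A_x=\{X^x_t\ge x\ \forall t\ge 0\}$ together with the monotone coupling, showing that $\P_0(H_{-1}=\infty)>0$ forces the range to be a.s.\ bounded below), Lemma~\ref{l:rightexcarefinite} (stationarity in time plus the strong Markov property, to propagate a.s.\ finiteness of $H_{\pm 1}$ along the return times $T^{(k)}_0$ and settle the recurrent case), and Lemma~\ref{l:noexplosion}. Your argument --- couple all the walks $X^z$ via the graphical construction, observe that $B_z=\{\limsup_t X^z_t=+\infty\}$ is nondecreasing in $z$ with constant probability $R$, conclude that all $B_z$ coincide a.s.\ with one shift-invariant event, and invoke ergodicity of the joint spatial shift --- subsumes both Lemma~\ref{l:rightexcarefinite} and Lemma~\ref{l:finmanyrexc} at once: you never need the excursion/return-time analysis because $\{\limsup_t X_t=+\infty\}$ is dispatched in one stroke. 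Two details deserve care but are fine: the paper's $N^{\pm}_\omega$ are \emph{not} independent of $\omega$, so your product structure must be manufactured, e.g.\ by time-changing i.i.d.\ rate-one Poisson clocks through $\int_0^t \omega^{\pm}_s(x)\,ds$ (the paper instead simply asserts that $N^{\pm}_\omega$ inherits spatial ergodicity from $\omega$); and ``ergodic $\times$ mixing is ergodic'' is the standard weak-mixing product fact. You also need $\tau^z_\Delta=\infty$ simultaneously for all $z$, which follows from stationarity and countability.

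The genuine gap is the non-explosion claim $\tau_\Delta=\infty$ $\P_0$-a.s., which is part of the statement and on which your entire ergodicity argument is conditioned, yet which you only sketch. Your plan --- sum the holding times at first-passage levels and show divergence --- founders exactly where you admit it does: the space-time points of first passage are trajectory-biased, the environment seen there is not stationary, and you offer no mechanism to control $\omega^{\pm}_{\tau_k}(k)$; in the non-uniformly elliptic case (the main point of the paper's generality) this is not a routine reduction ``to a summability statement for a stationary sequence''. (A secondary slip: (EL) does not force $X_t\to\pm\infty$ as $t\uparrow\tau_\Delta$; the oscillating possibility $\liminf=-\infty$, $\limsup=+\infty$ at the explosion time is also consistent with (EL) and would need handling.) The paper's Lemma~\ref{l:noexplosion} resolves this with an idea absent from your sketch: applying Birkhoff's ergodic theorem to the events $A^{a,b}_x$ of \eqref{e:prnoexpl2} (``no arrows on $\{x\}\times[a,b]$''), a.s.\ there exist sites $x<X^0_a<y$ whose columns carry no arrows during $[a,b]$; these act as barriers confining the walk, while \eqref{e:notstuck} (a consequence of (EL)) shows that an explosion in $(a,b]$ would require exiting every interval $[-n,n]$ before time $b$. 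Hence $\P_0(\tau_\Delta\in(a,b])=0$ for every short interval, and non-explosion follows. Your proof becomes complete if you substitute this barrier argument for your sketch; as written, the non-explosion step is unproved.
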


A zero-one law for directional transience is said to hold if the probabilities
in items \ref{transright} and \ref{transleft} of Theorem~\ref{thm:01law} are either $0$ or $1$.
This statement is equivalent to Theorem~\ref{thm:01law}
as the ellipticity assumption (EL) ensures that the event appearing in item \ref{rec} is almost surely equal
to the complement of the union of the events in \ref{transright}--\ref{transleft}.

As an immediate consequence of Theorem~\ref{thm:01law}, we obtain recurrence for any model satisfying (SE)--(EL)
that is symmetric with respect to reflection through the origin:
\begin{corollary}\label{cor:recurrencesymmetric}
Assume that (SE) and (EL) hold and that $(-X_t)_{t \ge 0}$ has under $\P_0$ the same distribution as $(X_t)_{t\ge 0}$.
Then item~\ref{rec} of Theorem~\ref{thm:01law} holds.
\end{corollary}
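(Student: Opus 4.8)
The plan is to deduce the corollary directly from the trichotomy of Theorem~\ref{thm:01law}, using the reflection symmetry to force the two transient alternatives to carry equal probability and then eliminating them by disjointness.

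First I would invoke Theorem~\ref{thm:01law}: since (SE) and (EL) hold, $\tau_\Delta = \infty$ $\P_0$-a.s., so that $X$ is a genuine $\Z$-valued process for all times, and one of the three alternatives \ref{transright}--\ref{rec} holds. In particular, writing $A_+ := \{\lim_{t\to\infty} X_t = +\infty\}$ and $A_- := \{\lim_{t\to\infty} X_t = -\infty\}$, these are well-defined measurable subsets of the path space of $X$, and alternative \ref{transright} (resp.\ \ref{transleft}) says precisely that $\P_0(A_+)=1$ (resp.\ $\P_0(A_-)=1$).

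Next I would use the hypothesis that $(-X_t)_{t\ge0}$ has under $\P_0$ the same distribution as $(X_t)_{t\ge0}$. The map $\gamma \mapsto -\gamma$ on path space sends the trajectories diverging to $+\infty$ exactly onto those diverging to $-\infty$, i.e.\ it interchanges $A_+$ and $A_-$. Hence the assumed invariance of the law of $X$ under this map yields
\[
\P_0(A_+) = \P_0(A_-).
\]
Since $A_+ \cap A_- = \emptyset$, the two probabilities cannot both equal $1$. Therefore alternative \ref{transright} is impossible (it would force $\P_0(A_+)=\P_0(A_-)=1$), and by the same token so is alternative \ref{transleft}. By the trichotomy the only remaining possibility is alternative \ref{rec}, which is the assertion of the corollary.

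The argument is short and presents no genuine obstacle; the only point requiring care is bookkeeping, namely verifying that the reflection $\gamma \mapsto -\gamma$ really interchanges $A_+$ and $A_-$, and that the symmetry hypothesis is being applied to path-measurable events under the same annealed law $\P_0$ with respect to which the three alternatives of Theorem~\ref{thm:01law} are stated. Everything else is an immediate consequence of the trichotomy.
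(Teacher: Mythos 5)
Your proof is correct and follows exactly the argument the paper has in mind when it calls the corollary an ``immediate consequence'' of Theorem~\ref{thm:01law}: the reflection symmetry forces $\P_0(\lim_{t\to\infty}X_t=+\infty)=\P_0(\lim_{t\to\infty}X_t=-\infty)$, the disjointness of these two events rules out alternatives \ref{transright} and \ref{transleft}, and the trichotomy then leaves alternative \ref{rec}. No gaps; the bookkeeping point you flag (that $\gamma\mapsto-\gamma$ interchanges the two divergence events under the annealed law $\P_0$) is exactly the right thing to check and is handled correctly.
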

For an interesting example to which Corollary~\ref{cor:recurrencesymmetric} applies, consider the following.
Let $\omega^{+}_t(x) := \alpha \eta_t(x) + \beta (1-\eta_t(x))$, $\omega^-_t := \alpha (1-\eta_t(x)) + \beta \eta_t(x)$
where $0 < \beta < \alpha <\infty$ and
$(\eta_t)_{t \ge 0}$ is a simple symmetric exclusion process in $\Z$
started from a product Bernoulli measure $\nu_{\rho}$ with density $\rho \in (0,1)$.
Very little is known for this model in the literature (see e.g.\ \cite{AvdHoRe10}, \cite{HS14} and \cite{dSa14}),
in particular in the case $\rho=1/2$ where the expected asymptotic speed of $X$ is zero.
However, since it satisfies (SE)--(EL) and is reflection-symmetric for $\rho=1/2$,
Corollary~\ref{cor:recurrencesymmetric} implies that it is recurrent in this case.

\section{Examples and discussion}
\label{s:examplesanddiscussion}

In the literature, $\omega$ is often given as a functional of an interacting particle system,
i.e., of a Markov process $(\eta_t)_{t \ge 0}$ on $E^{\Z}$ where $E$ is a metric space, often assumed compact.
For example, in the setting of \cite{ReVo11}, the transition rates are given by
\begin{equation}\label{e:translratesFlo}
\omega^{\pm}_t(x) = \alpha^{\pm}(\theta^x \eta_t)
\end{equation}
where the functions $\alpha^{\pm}:E^{\Z} \to [0,\infty)$ satisfy some regularity properties.
The setting of \cite{AvdHoRe11} is a particular case where $E = \{0,1\}$.

Since directional transience follows from a law of large numbers with non-zero speed,
and recurrence from a functional central limit theorem if the speed is zero,
Theorem~\ref{thm:01law} brings no new information
in the cases where these results are known.
However, our result applies to many situations where such theorems
have not yet been proved, which is the case for several uniformly elliptic but non-uniformly mixing models,
e.g., when $\eta_t$ is a simple exclusion
process or a system of independent random walks outside the perturbative regimes considered in \cite{HHSST14}, \cite{HS14}.
By ``uniformly mixing'' here we mean that the conditional law of $\eta_t(0)$ given $\eta_0$
converges to a fixed law uniformly over all possible realizations of $\eta_0$;
cf.\ e.g.\ the cone-mixing condition  of \cite{AvdHoRe11} (Definition~1.1 therein),
or the coupling conditions of \cite{ReVo11} (Assumptions~1a--1b therein).

Let us now describe a large class of examples satisfying our assumptions that includes
many non-uniformly elliptic cases with slow and non-uniform mixing:
\begin{example}\label{ex:mainexample}
\textnormal{
Let $\eta_t(x)$, $x \in \Z, t \ge 0$, be i.i.d.\ in $x$ with each $\eta_t(x)$ distributed
as an irreducible, positive-recurrent Markov process on a countable state-space $E$,
started from its unique invariant probability measure $\pi$.
Let $\omega$ be defined by
$\omega^{\pm}_t(x) = \alpha^{\pm}(\eta_t(x))$
with $\alpha^{\pm}:E \to (0,\infty)$,
i.e., the jump rates are always positive (in which case the model is called \emph{elliptic})
and depend only on the state of $\eta_t$ at $x$.}
\end{example}

The models defined in Example~\ref{ex:mainexample} clearly satisfy (SE).
Moreover:

\begin{theorem}\label{thm:examplesatisfiesEL}
The models defined in Example~\ref{ex:mainexample} satisfy (EL).
\end{theorem}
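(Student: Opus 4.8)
The plan is to deduce (EL) from a connectivity property of the random set of sites visited infinitely often. Write $V$ for the (random) set of $x \in \Z$ such that $X_t = x$ for a sequence of times $t \uparrow \tau_\Delta$. Since $X$ is nearest-neighbour, the numbers of left- and right-crossings of any edge $(x,x+1)$ differ by at most one, so the edge is crossed infinitely often in one direction iff it is crossed infinitely often in both; consequently $x, x+1 \in V$ as soon as $(x,x+1)$ is crossed infinitely often, and a finite liminf or limsup would force $V$ to be a nonempty proper subset of $\Z$. Conversely, if $V = \emptyset$ then, again by the nearest-neighbour property, $X$ cannot cross any fixed edge infinitely often, so it escapes monotonically and $\liminf_{t\to\tau_\Delta} X_t = \limsup_{t\to\tau_\Delta} X_t \in \{-\infty,+\infty\}$, while if $V = \Z$ then $\liminf = -\infty$ and $\limsup = +\infty$. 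Thus (EL) is equivalent to $V \in \{\emptyset, \Z\}$ almost surely, and since $\Z$ is connected it suffices to prove the local statement that, almost surely and for every $x$, $\{x \in V\} \subseteq \{x-1 \in V\} \cap \{x+1 \in V\}$.

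For the local step I would fix $x$ and show that, on $\{x \in V\}$, the walk jumps from $x$ to $x+1$ infinitely often (the jump to $x-1$ being symmetric). Let $s_1 < s_2 < \cdots$ be the successive arrival times of $X$ at $x$ and $d_1 < d_2 < \cdots$ the corresponding departure times; on $\{x \in V\}$ there are infinitely many. Using the graphical construction of Section~\ref{s:construction}, the direction of the $k$-th departure is governed, conditionally on $\mathcal{F}_{s_k}$ and on $\omega$, by a competition between two independent inhomogeneous clocks of rates $\alpha^+(\eta_u(x))$ and $\alpha^-(\eta_u(x))$, which is independent of the past; writing $\rho_k$ for the conditional probability of a rightward departure, the conditional Borel--Cantelli lemma reduces the claim to showing that $\sum_k \rho_k = \infty$ on $\{x \in V\}$. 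Since $\rho_k$ is comparable to $\min\{1, \int_{s_k}^{d_k} \alpha^+(\eta_u(x))\,\d u\}$, it is enough to prove that the total integrated rightward rate over the sojourns at $x$ is infinite, i.e.\ $\sum_k \int_{s_k}^{d_k}\alpha^+(\eta_u(x))\,\d u = \infty$ almost surely on $\{x \in V\}$.

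I would establish this divergence using positive recurrence together with the spatial independence built into Example~\ref{ex:mainexample}. Because the processes $\eta_\cdot(y)$ are independent across $y$ and the motion of $X$ away from $x$ uses only the data at sites $\ne x$, the process $\eta_\cdot(x)$ is independent of the durations of the excursions of $X$ away from $x$; combined with the Markov property of $\eta_\cdot(x)$, this shows that the states $\eta_{s_k}(x)$ seen at the return times form the positive-recurrent chain observed along an increasing sequence of times that does not anticipate its future. Fixing a finite set $F \subseteq E$ with $\pi(F)$ close to $1$, one then argues that $\eta_{s_k}(x) \in F$ for infinitely many $k$, and on $F$ the per-sojourn contribution is bounded below by $\delta_F := \min_{e \in F}\E_e[\int_0^{\zeta}\alpha^+(\eta_u)\,\d u] > 0$, where $\zeta$ is an independent $\alpha^-(\eta)$-rate killing time; summing over the infinitely many good returns yields the divergence. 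The main obstacle is precisely this decorrelation step: a priori the return times $s_k$ are selected by $X$, which depends on $\omega$, so one must rule out that $X$ returns to $x$ only when $\eta_\cdot(x)$ sits in increasingly atypical, rightward-unfavourable states. I expect to resolve it via the independence described above, which lets one compare the observed-at-returns chain with the free chain and invoke its ergodic theorem; ellipticity ($\alpha^\pm > 0$) guarantees $\delta_F > 0$ for every $F$, and positive recurrence guarantees that a fixed good set is revisited with positive frequency, so both hypotheses of Example~\ref{ex:mainexample} enter here in an essential way.
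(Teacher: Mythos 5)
Your global skeleton is sound and in fact matches the paper's: the reduction of (EL) to the connectivity statement ``$x$ visited infinitely often $\Rightarrow$ $x\pm1$ visited infinitely often'' is exactly \eqref{e:prexamples1}, and the per-return Borel--Cantelli framing is legitimate. The genuine gap is precisely at the step you flag as the main obstacle, and your proposed resolution does not close it. The principle you lean on --- that positive recurrence guarantees a fixed finite good set $F$ is hit with positive frequency by $\eta_{s_k}(x)$ --- is false for a chain observed along non-anticipating stopping times: by irreducibility and recurrence one can define stopping times $s_{k+1}:=\inf\{t>s_k+1:\eta_t(x)=e_k\}$ with $e_k$ escaping every finite set, so ``the positive-recurrent chain observed along an increasing sequence of times that does not anticipate its future'' can avoid $F$ eventually. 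Nor does your independence observation repair this: the excursion durations are indeed independent of $\eta_\cdot(x)$, but the departure times $d_k$ are governed by arrows at $x$ with intensity $\alpha^\pm(\eta_\cdot(x))$, so the state at departure is size-biased, the return occurs after an independent but possibly very short excursion, and the sampled sequence $\eta_{s_k}(x)$ is not Markov on its own --- there is no clean comparison with the ``free chain and its ergodic theorem.'' The paper's proof supplies the missing idea: it does not look at the states at the return times at all, but \emph{refreshes} the chain first. One fixes a state $\mathcal{O}$, lets $U_k$ be the first time after $V_{k-1}+1$ at which $\eta_\cdot(x)=\mathcal{O}$ (a stopping time of the chain at $x$ alone, finite by recurrence), and lets $V_k$ be the next return of $X$ to $x$; the increment $V_k-U_k$ is measurable with respect to data independent of $(\eta_{U_k+s}(x))_{s\ge0}$, so however biased the elapsed time is, the \emph{uniform-in-time} estimate \eqref{e:prexamples6}, namely $\inf_{t\ge0}\P_0(\eta_{t+s}(0)\in E^*\ \forall s\in[0,\varepsilon]\mid\eta_0(0)=\mathcal{O})>\tfrac12$ (a consequence of convergence to $\pi$), yields the uniformly positive success probability \eqref{e:prexamples13}. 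Without this device, or an equivalent one, your claim $\sum_k\rho_k=\infty$ on $\{x\in V\}$ has no proof.

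Two secondary gaps. First, explosion: your dichotomy $V\in\{\emptyset,\Z\}$ correctly characterises (EL) even when $\tau_\Delta<\infty$, but on the event $\{\tau_\Delta<\infty,\ x\in V\}$ your divergence argument fails outright, since $\sum_k\int_{s_k}^{d_k}\alpha^+(\eta_u(x))\,du\le\int_0^{\tau_\Delta}\alpha^+(\eta_u(x))\,du<\infty$ almost surely on a finite horizon; the paper rules out explosion beforehand (Corollary~\ref{cor:noexpexamples}, via Proposition~\ref{prop:notstuck} and Lemma~\ref{l:noexplosion}) and needs this even to define the times $U_k,V_k$. Second, your local step assumes infinitely many arrival times at $x$ on $\{x\in V\}$, but your definition of $V$ also captures the scenario where the walk eventually sits at $x$ forever; this is easy to exclude here (by Birkhoff for $\eta_\cdot(x)$ the total jump intensity $\int^\infty(\alpha^++\alpha^-)(\eta_u(x))\,du$ diverges), but it must be addressed --- in the paper it is subsumed by \eqref{e:notstuck}. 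On the positive side, if the refreshment idea were inserted, your Borel--Cantelli formulation would deliver ``hits $x-1$ infinitely often'' in one stroke, dispensing with the paper's separate stationarity bootstrap \eqref{e:prexamples3} from $T^{(1)}_{x-1}<\infty$ to all $T^{(k)}_{x-1}<\infty$.
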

\noindent
The proof of this theorem is given in Section~\ref{s:proofofthmexamples} below.
Note that, as already mentioned, it covers many models that
are slowly and non-uniformly mixing and thus do not fall into the categories
generally studied in the literature of RWDRE so far.

It is interesting to ask in which directions Theorem~\ref{thm:01law} could be generalised,
and how far our hypotheses could be weakened.
The analogous result in discrete time can be proved with a similar approach
via graphical representation (cf.\ Section~\ref{s:construction} below).
However, new ideas are needed for random walks in other graphs, e.g.\
$\Z^d$ with non-nearest neighbour jumps and/or $d > 1$, and regular trees.

\section{Graphical construction}
\label{s:construction}

We construct next a particular version of the process $X$ with convenient properties.
Denote by $\mathcal{M}_p$ the space of point measures on $\Z \times [0,\infty)$,
and let $N^+_\omega$, $N^-_\omega \in \mathcal{M}_p$ be two independent Poisson point processes
with intensity measures $\mu^{\pm}_\omega$ identified by
\begin{equation}\label{e:intPPPs}
\mu^{\pm}_\omega(A \times B) := \sum_{x \in A} \int_B \omega^{\pm}_s(x) ds, \;\;\; A \subset \Z, \, B \subset [0,\infty) \text{ measurable.}
\end{equation}
We denote by $\widehat{P}_\omega$ the joint law of $N^{+}_\omega$, $N^{-}_\omega$,
and by $\widehat{\P}$ the joint law of the latter and $\omega$.
Define the space-time translations $\theta^z_t$ of $N^{\pm}_\omega$ and functions thereof by
\begin{equation}\label{e:deftransN}
\begin{array}{lcll}
\theta^z_t N^{\pm}_\omega (C) & := & N^{\pm}_\omega(C+(z,t)), \;\;\; & C \subset \Z \times [0,\infty) \text{ measurable,}\\
\theta^z_t f(N^{\pm}_\omega) & := & f(\theta^z_t N^{\pm}_\omega), & f:\mathcal{M}_p \to \R,
\end{array}
\end{equation}
where
\[C+(z,t):= \bigcup_{(y,s) \in C}\{(y+z,s+t)\}.\]
We note that, under $\widehat{\P}$, $N^{\pm}_\omega$ inherits from $\omega$ the stationarity with respect to space-time translations
and the ergodicity with respect to spatial translations.

On each point of $N^{+}_\omega$, resp.\ $N^{-}_\omega$, we draw a unit-length arrow pointing to the right, resp.\ to the left.
Then we set, for $x \in \Z$, $X^{x}$ to be the path started at $x$ that proceeds by moving upwards in time and forcibly across any arrows
in a right-continuous way; the paths are defined only up to the explosion time.
See Figure~\ref{fig:graphrep}.

\begin{figure}[hbtp]
\vspace{1cm}
\begin{center}
\setlength{\unitlength}{0.3cm}
\begin{picture}(20,10)(0,0)
\put(-0.5,1){\line(23,0){23}} \put(-0.5,10){\line(23,0){23}}

\put(2,1){\line(0,1){9}}
\put(5,1){\line(0,1){9}} \put(8,1){\line(0,1){9}}
\put(11,1){\line(0,1){9}} \put(14,1){\line(0,1){9}}
\put(17,1){\line(0,1){9}} \put(20,1){\line(0,1){9}}

{\linethickness{0.05cm}
\put(5,1){\line(0,1){1}} \put(5,2){\line(1,0){3}}
\put(8,2){\line(0,1){0.8}} \put(8,2.8){\line(-1,0){3}}
\put(5,2.8){\line(0,1){0.7}} \put(5,3.5){\line(-1,0){3}}
\put(2,3.5){\line(0,1){1.3}} \put(2,4.8){\line(1,0){3}}
\put(5,4.8){\line(0,1){0.8}} \put(5,5.6){\line(1,0){3}}
\put(8,5.6){\line(0,1){0.5}} \put(8,6.1){\line(1,0){3}}
\put(11,6.1){\line(0,1){0.9}} \put(11,7){\line(1,0){3}}
\put(14,7){\line(0,1){2.4}} \put(14,9.4){\line(-1,0){3}}
\put(11,9.4){\line(0,1){0.6}}
}

\put(5,2){\vector(1,0){3}}
\put(8,2.8){\vector(-1,0){3}}
\put(5,3.5){\vector(-1,0){3}}
\put(2,4.8){\vector(1,0){3}}
\put(5,5.6){\vector(1,0){3}}
\put(8,6.1){\vector(1,0){3}}
\put(11,7){\vector(1,0){3}}
\put(14,9.4){\vector(-1,0){3}}

{\linethickness{0.05cm}
\put(17,1){\line(0,1){0.6}} \put(17,1.6){\line(-1,0){3}}
\put(14,1.6){\line(0,1){0.8}} \put(14,2.4){\line(-1,0){3}}
\put(11,2.4){\line(0,1){1.3}} \put(11,3.7){\line(1,0){3}}
\put(14,3.7){\line(0,1){0,8}} \put(14,4.5){\line(1,0){3}}
\put(17,4.5){\line(0,1){0,5}} \put(17,5){\line(1,0){3}}
\put(20,5){\line(0,1){1}} \put(20,6){\line(-1,0){3}}
\put(17,6){\line(0,1){0,6}} \put(17,6.6){\line(1,0){3}}
\put(20,6.6){\line(0,1){1.1}} \put(20,7.7){\line(-1,0){3}}
\put(17,7.7){\line(0,1){1.3}} \put(17,9){\line(-1,0){3}}
}

\put(17,1.6){\vector(-1,0){3}}
\put(14,2.4){\vector(-1,0){3}}
\put(11,3.7){\vector(1,0){3}}
\put(14,4.5){\vector(1,0){3}}
\put(17,5){\vector(1,0){3}}
\put(20,6){\vector(-1,0){3}}
\put(17,6.6){\vector(1,0){3}}
\put(20,7.7){\vector(-1,0){3}}
\put(17,9){\vector(-1,0){3}}

\put(2,3){\vector(-1,0){3}}
\put(2,7){\vector(-1,0){3}}
\put(2,2.4){\vector(1,0){3}}
\put(2,8.6){\vector(1,0){3}}

\put(5,6.4){\vector(-1,0){3}}
\put(5,4.2){\vector(1,0){3}}

\put(8,7.5){\vector(-1,0){3}}
\put(8,3.2){\vector(1,0){3}}
\put(8,9.1){\vector(1,0){3}}

\put(11,5.1){\vector(-1,0){3}}
\put(11,8.1){\vector(-1,0){3}}
\put(11,1.9){\vector(1,0){3}}
\put(11,5.7){\vector(1,0){3}}

\put(14,2.9){\vector(1,0){3}}
\put(14,6.3){\vector(1,0){3}}

\put(17,7.4){\vector(-1,0){3}}
\put(17,4.1){\vector(1,0){3}}
\put(17,9.5){\vector(1,0){3}}

\put(20,2.4){\vector(-1,0){3}}
\put(20,3.3){\vector(1,0){3}}
\put(20,8.5){\vector(1,0){3}}

\put(-6.5,5.5){$\text{\small time}$}
\put(-3.5,4.5){\vector(0,1){3}}
\put(10.2,11){$X^x_t$}
\put(4.5,0){$x$}\put(16.5,0){$y$}
\put(-1.7,0.7){$0$}
\put(-1.7,9.7){$t$}
\put(23,0){$\mathbb{Z}$}
\end{picture}
\end{center}
\caption{\small Graphical construction. The arrows represent
events of $N^{\pm}_\omega$. The thick lines mark
the paths $X^x$ and $X^y$, which in this example coalesce at site $y-1$.} \label{fig:graphrep}
\end{figure}
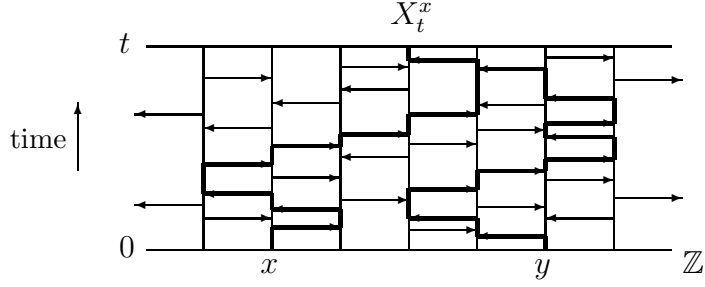

Using the right-continuity of $\omega$, it is straightforward to check that this construction gives the correct law, i.e.,
$X^{x}$ has under $\widehat{P}_\omega$ the same law as $X$ under $P^\omega_x$.
In particular, this provides a coupling for copies of the random walk starting from all initial positions,
which will facilitate the proof of Theorem~\ref{thm:01law}.

With this construction, the explosion times $\tau^x_\Delta$, $x \in \Z$ can be defined as
\begin{equation}\label{e:deftau_Delta}
\tau^x_\Delta := \sup \{t > 0 \colon\, X^x \text{ crosses finitely many arrows up to time } t\},
\end{equation}
and we identify $X := X^0$, $\tau_\Delta := \tau^0_\Delta$.

We end this section with the following monotonicity property,
which is a consequence of the graphical construction and will be useful in the proof of Lemma~\ref{l:finmanyrexc} below.
\begin{lemma}\label{l:monot}
For any $y, z \in \Z$ such that $y \le z$, $\widehat{\P}$-a.s.,
\begin{equation}\label{e:monot}
X^y_t \le X^z_t \; \forall \; t \in [0, \tau^y_\Delta \wedge \tau^z_\Delta).
\end{equation}
\end{lemma}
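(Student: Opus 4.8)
The plan is to exploit the fact that all the paths $X^x$ are built from the \emph{same} realisation of the arrow configuration $N^+_\omega, N^-_\omega$, so that two paths driven by common arrows cannot cross without first meeting, and once they meet they coalesce. Concretely, I would study the integer-valued, right-continuous difference process $D_t := X^z_t - X^y_t$, which satisfies $D_0 = z - y \ge 0$, and show that it remains nonnegative up to the explosion time.

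First I would fix $t < \tau^y_\Delta \wedge \tau^z_\Delta$ and record two elementary facts. By the definition \eqref{e:deftau_Delta} of the explosion times, both $X^y$ and $X^z$ cross only finitely many arrows on $[0,t]$, so on this interval each path is piecewise constant with finitely many jumps; let $0 < t_1 < \cdots < t_k \le t$ enumerate the (finitely many) times at which \emph{either} path jumps. Moreover, since by \eqref{e:intPPPs} the intensity $\mu^{\pm}_\omega$ assigns no mass to any fixed time, $\widehat{\P}$-a.s.\ all points of $N^+_\omega$ and $N^-_\omega$ occur at distinct times; hence at each $t_i$ there is exactly one arrow, located at some site $w_i$ and pointing in a single direction, and a given path jumps at $t_i$ precisely when it sits at $w_i$ immediately before $t_i$.

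The core is then an induction over $t_1, \dots, t_k$ showing $D_{t_i} \ge 0$, starting from $D_0 \ge 0$. On $[t_{i-1}, t_i)$ the process $D$ is constant, so $D_{t_i^-} = D_{t_{i-1}}$. At time $t_i$ I would distinguish the positions of the two paths relative to $w_i$: if neither $X^y_{t_i^-}$ nor $X^z_{t_i^-}$ equals $w_i$, neither path moves and $D_{t_i} = D_{t_i^-}$; if exactly one of them equals $w_i$, only that path moves by $\pm 1$, so $\lvert D_{t_i} - D_{t_i^-}\rvert = 1$; and if both equal $w_i$ --- which forces $D_{t_i^-}=0$ --- both paths cross the same arrow and move identically, so $D_{t_i} = D_{t_i^-} = 0$. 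In particular $D$ changes by at most one unit per jump, and whenever $D_{t_i^-}=0$ the two paths occupy a common site and therefore coalesce, giving $D_{t_i}=0$. Since an integer sequence changing by at most one per step cannot pass from a nonnegative value to a negative one without taking the value $0$ first, and at value $0$ it is pinned there, we get $D_{t_i}\ge 0$ for every $i$, whence $D_t \ge 0$.

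I expect the only genuinely delicate point to be the bookkeeping that rules out a ``crossing jump'': a priori two simultaneous arrows at the two distinct sites occupied by $X^y$ and $X^z$ could move them towards each other and decrease $D$ by $2$, which is exactly what the a.s.\ distinctness of arrow times forbids. Granting that, the coalescence step together with the nearest-neighbour constraint make the conclusion $X^y_t \le X^z_t$ for all $t \in [0, \tau^y_\Delta \wedge \tau^z_\Delta)$ immediate, and the restriction to times before explosion is precisely what guarantees the finiteness of the jump set used in the induction.
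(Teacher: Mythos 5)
Your proof is correct and follows essentially the same route as the paper's: the paper's (one-sentence) argument likewise rests on the ordered start, nearest-neighbour jumps, the almost-sure impossibility of simultaneous jumps before the paths meet, and coalescence upon meeting. Your contribution is simply to make this explicit via the difference process $D_t = X^z_t - X^y_t$ and an induction over the (finitely many, a.s.\ distinct) arrow times, which is a careful formalisation rather than a different method.
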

\begin{proof}
Since the paths start ordered, move by nearest-neighbour jumps, and a.s.\ cannot jump simultaneously before they meet,
either $X^y_t < X^z_t$ for all relevant $t$ or there exists a first $s\ge 0$ such that $X^y_s = X^z_s$, in which case by construction $X^y_u = X^z_u$ for all $u \ge s$.
\end{proof}


\section{Proof of Theorem~\ref{thm:01law}}
\label{s:proofmainthm}

For $A \subset \Z$, denote by
\begin{equation}\label{e:defhittimes}
H_A := \inf \{t > 0 \colon\, X_t \in A \}
\end{equation}
the hitting time of $A$.
Let $A^c := \Z \setminus A$ and note that, if $A$ is finite, then $H_{A^c}$ is a.s.\ finite by \eqref{e:notstuck}.
For a random time $S \in [0,\infty]$, define
\begin{equation}\label{e:defthetaS}
\Theta_S H_A := \left\{ \begin{array}{ll} \inf\{ t > 0 \colon\, X_{S+t} \in A \} & \text{ if } S < \infty,\\
\infty & \text{ otherwise.} \end{array}\right.
\end{equation}
Note that $\Theta_S H_A = \theta^{X_S}_S H_{A-X_S}$ when $S<\infty$,
where $A-x:=\{z -x \colon\, z \in A\}$.
Define now the $k$-th return time $T^{(k)}_A$ to $A$ as follows.
Set $T^{(0)}_A:= 0$ and, recursively for $k \ge 0$,
\begin{equation}\label{e:defhittimeslater}
T^{(k+1)}_A := T^{(k)}_A +  \Theta_{T^{(k)}_A} \left(H_{A^c} + \Theta_{H_{A^c}} H_A \right).
\end{equation}
Note that $T^{(1)}_A = H_A$ if $X_0 \notin A$. When $A = \{z\}$, we write $H_z$ and $T^{(k)}_z$.

Our proof of Theorem~\ref{thm:01law} is based on three lemmas which we state next;
their proofs are given respectively in Sections~\ref{ss:thetranslationinvariancelemma} and~\ref{ss:the1dimlemma} below.
The first of them implies that, if the random walk visits $-1$ (resp.\ $1$) a.s.,
then all its excursions from $0$ to the right (resp.\ to the left) will be a.s.\ finite.
\begin{lemma}\label{l:rightexcarefinite}
Assume that (SE) holds, and let $x \in \{-1, 1\}$.
If $\P_0(H_{x} < \infty) = 1$, then
\begin{equation}\label{e:rightexcarefinite}
T^{(k)}_0 < \infty \;\; \Rightarrow \;\; T^{(k)}_0 + \Theta_{T^{(k)}_0} H_{x} < \infty \;\;\;\; \P_0 \text{-a.s.\ for all } k \ge 1.
\end{equation}
\end{lemma}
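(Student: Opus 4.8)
The plan is to apply the strong Markov property at the return time $S:=T^{(k)}_0$ and to transfer the hypothesis $\P_0(H_x<\infty)=1$ to the environment as seen from that return time. Writing the annealed measure as $\P_0=\P\otimes P^\omega_0$ and setting $g(\omega):=P^\omega_0(H_x<\infty)$, the hypothesis reads $\E[g]=\P_0(H_x<\infty)=1$; since $0\le g\le 1$ this forces $g=1$ for $\P$-a.e.\ $\omega$, i.e.\ the ``good'' set $\mathcal{G}:=\{\omega\colon g(\omega)=1\}$ has $\P(\mathcal{G})=1$. The assertion \eqref{e:rightexcarefinite} is exactly that $\Theta_{T^{(k)}_0}H_x<\infty$ holds $\P_0$-a.s.\ on $\{T^{(k)}_0<\infty\}$, so it suffices to show $\P_0(\Theta_S H_x=\infty,\ S<\infty)=0$.

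First I would invoke the strong Markov property \eqref{e:MPX} in its stopping-time form (valid since $(X_t,t)$ is Feller) at $S=T^{(k)}_0$, which is an a.s.\ finite $(\mathcal{F}_t)$-stopping time on $\{S<\infty\}$ with $X_S=0$ there. This yields $\P_0(\Theta_S H_x<\infty\mid\mathcal{F}_S)=P^{\theta_S\omega}_0(H_x<\infty)=g(\theta_S\omega)$ on $\{S<\infty\}$, and hence
\[
\P_0(\Theta_S H_x=\infty,\ S<\infty)=\E_0\big[\1{\{S<\infty\}}(1-g(\theta_S\omega))\big]\le\P_0\big(S<\infty,\ \theta_S\omega\in\mathcal{G}^c\big).
\]
Thus the lemma reduces to showing that the time-shifted environment $\theta_S\omega$ almost surely lands in the full-measure set $\mathcal{G}$, i.e.\ $\P_0(S<\infty,\ \theta_S\omega\in\mathcal{G}^c)=0$.

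The difficulty is that $S$ is random, so I cannot simply use $\theta_t\omega\stackrel{d}{=}\omega$. I would combine two ingredients. On the one hand, time-stationarity (SE) together with Fubini gives $\E\int_0^\infty\1{\{\theta_t\omega\in\mathcal{G}^c\}}\,\d t=\int_0^\infty\P(\mathcal{G}^c)\,\d t=0$, so that for $\P$-a.e.\ $\omega$ the set of ``bad'' times $B_\omega:=\{t\ge 0\colon\theta_t\omega\in\mathcal{G}^c\}$ is Lebesgue-null. On the other hand, I claim that for $\P$-a.e.\ $\omega$ the conditional law $P^\omega_0(T^{(k)}_0\in\cdot\,)$ is absolutely continuous with respect to Lebesgue measure: under $P^\omega_0$ the return time $T^{(k)}_0$ is a.s.\ one of the countably many jump times $J_n$ of $X$, and each $J_n$ has, given $\omega$, a law with a density, since the successive holding times have cumulative hazards $u\mapsto\int(\omega^+_s+\omega^-_s)\,\d s$ that are absolutely continuous before the explosion time; hence $P^\omega_0(T^{(k)}_0\in B)\le\sum_n P^\omega_0(J_n\in B)=0$ for every Lebesgue-null $B$. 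Granting this, conditioning on $\omega$ gives
\[
\P_0\big(S<\infty,\ \theta_S\omega\in\mathcal{G}^c\big)=\E\Big[E^\omega_0\big[\1{\{S<\infty\}}\1{\{\theta_S\omega\in\mathcal{G}^c\}}\big]\Big]\le\E\big[P^\omega_0(T^{(k)}_0\in B_\omega)\big]=0,
\]
using $P^\omega_0(T^{(k)}_0\in\cdot\,)\ll\mathrm{Leb}$ against the Lebesgue-null set $B_\omega$, which closes the argument.

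I expect the main obstacle to be the absolute-continuity claim for the conditional law of the return time. One must verify that $T^{(k)}_0$ never concentrates on a Lebesgue-null time set, uniformly enough over the random, $\omega$-dependent combinatorics of the excursion; this rests on the local integrability of the rates $\omega^\pm$ along the path up to $\tau_\Delta$ and a careful decomposition over the jump times $J_n$. Note that mere non-atomicity of $T^{(k)}_0$ would not suffice, since a non-atomic law may still charge $B_\omega$, so genuine absolute continuity is needed. By contrast, the reduction via the strong Markov property and the Fubini step are routine consequences of (SE).
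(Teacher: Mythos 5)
Your proposal is correct, but it takes a genuinely different route from the paper at the decisive step. Both arguments share the same reduction: apply the strong Markov property at $S = T^{(k)}_0$ (on $\{S<\infty\}$, where $X_S = 0$), so that the lemma amounts to showing $\theta_S\omega$ a.s.\ lands in the full-measure set $\mathcal{G} = \{\omega\colon P^\omega_0(H_x<\infty)=1\}$. The paper handles the randomness of $S$ by proving the stronger statement \eqref{e:samepropforallt}, namely that $P^{\theta_t\omega}_0(H_x=\infty)=0$ simultaneously for \emph{all} $t\ge 0$ a.s.: first for rational $t$ by stationarity, then for all $t$ by establishing right-continuity of $t \mapsto P^{\theta_t\omega}_0(H_x=\infty)$ via the estimates \eqref{e:prlemmaMKV1}--\eqref{e:prlemmaMKV2}, after which the integral in \eqref{e:prlemmaMKV3} vanishes identically. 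You instead retain only the Fubini consequence of time-stationarity---the bad-time set $B_\omega$ is Lebesgue-null for a.e.\ $\omega$---and show that the random time cannot charge it because the quenched law of $T^{(k)}_0$ is absolutely continuous. Your absolute-continuity claim does hold under the paper's standing assumptions: on $\{T^{(k)}_0<\infty\}$ the return time is one of the jump times $J_n$ (returns to $0$ occur strictly before $\tau_\Delta$ and at jumps, by the cemetery convention), and inductively each $J_n$ is a mixture of holding-time laws with survival functions $\exp\{-\int_t^{\cdot}(\omega^+_s(y)+\omega^-_s(y))\,ds\}$, which are absolutely continuous where finite; right-continuity of $\omega$ at the start of each holding period rules out an atom there, and a hazard that ceases to be integrable merely truncates the support without creating atoms, while degenerate contributions concentrated at earlier jump times are harmless since those are absolutely continuous by induction. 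Your remark that mere non-atomicity would not suffice is exactly the right point of care. As for what each approach buys: the paper's stronger simultaneous statement \eqref{e:samepropforallt} is reused later (in \eqref{e:prexamples3} for Theorem~\ref{thm:examplesatisfiesEL}), so it pays for itself, whereas your argument is more modular and classical (stationarity plus absolutely continuous hitting-time laws) and avoids analysing the regularity of $t \mapsto P^{\theta_t\omega}_0(H_x=\infty)$, at the cost of the jump-time decomposition. One technicality you share with the paper: $T^{(k)}_0$ need not be a.s.\ finite, so the strong Markov property (stated in the paper for a.s.\ finite stopping times) should formally be applied at $S\wedge n$ on $\{S\le n\}$ with $n\to\infty$; the paper glosses this in precisely the same way in \eqref{e:prlemmaMKV3}.
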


The second lemma excludes the possibility of explosions in our setting.
\begin{lemma}\label{l:noexplosion}
Assume that (SE) and \eqref{e:notstuck} hold. Then
\begin{equation}\label{e:noexplosion}
\P_0 \left( \tau_\Delta = \infty \right)=1.
\end{equation}
\end{lemma}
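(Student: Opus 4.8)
The plan is to fix a deterministic horizon $T_0 \in \N$ and to show that, $\widehat{\P}$-a.s., the walk $X = X^0$ cannot leave a (random) bounded interval before time $T_0 \wedge \tau_\Delta$; assumption~\eqref{e:notstuck} then forces $\tau_\Delta > T_0$, and intersecting over $T_0 \in \N$ gives \eqref{e:noexplosion}. I will work throughout with the graphical construction of Section~\ref{s:construction} under $\widehat{\P}$, which is legitimate because $X^0$ under $\widehat{P}_\omega$ has the law of $X$ under $P^\omega_0$ and $\tau_\Delta = \tau^0_\Delta$, so that $\widehat{\P}(\tau_\Delta = \infty)=1$ yields the claim.

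The engine of the argument is the existence of \emph{blocking columns}. For $n \in \Z$ set $\mathcal{N}^+_n := N^+_\omega(\{n\} \times [0,T_0])$, the number of rightward arrows based at site $n$ in the window $[0,T_0]$, and define $\mathcal{N}^-_n$ analogously from $N^-_\omega$. Since $N^\pm_\omega \in \mathcal{M}_p$ is locally finite, $\mathcal{N}^\pm_n < \infty$ a.s., and conditionally on $\omega$ the variable $\mathcal{N}^+_0$ is Poisson with the a.s.\ finite parameter $\int_0^{T_0}\omega^+_s(0)\,\d s$; hence
\[
\rho := \widehat{\P}\big(\mathcal{N}^+_0 = 0\big) = \E\Big[\exp\big(-\textstyle\int_0^{T_0}\omega^+_s(0)\,\d s\big)\Big] > 0 .
\]
By \eqref{e:deftransN}, $\mathcal{N}^+_n$ is the spatial translate by $\theta^n$ of $\mathcal{N}^+_0$, and since $N^+_\omega$ is stationary and ergodic under spatial translations (as recorded just after \eqref{e:deftransN}), the spatial ergodic theorem gives $\tfrac1N\sum_{n=0}^{N-1}\mathbbm{1}\{\mathcal{N}^+_n = 0\} \to \rho > 0$ a.s. In particular there a.s.\ exists a smallest site $n_0 \ge 0$ with $\mathcal{N}^+_{n_0} = 0$, and the same argument applied to $N^-_\omega$ produces a site $-m_0 \le 0$ with $\mathcal{N}^-_{-m_0} = 0$.

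Confinement is then read off directly from the graphical construction. For $t < \tau_\Delta$ the path crosses only finitely many arrows, so it is a genuine right-continuous nearest-neighbour path; to pass from $n_0$ to $n_0+1$ it must cross a rightward arrow based at $n_0$, of which there are none in $[0,T_0]$, and symmetrically it cannot cross from $-m_0$ to $-m_0-1$. As $X^0$ starts at $0 \in [-m_0,n_0]$, it follows that $X^0_t \in [-m_0,n_0]$ for all $t \in [0, T_0 \wedge \tau_\Delta)$. If $\widehat{\P}(\tau_\Delta \le T_0) > 0$, then on that event $X$ would stay in $[-K,K]$ with $K := \max(n_0,m_0) \in \N$ throughout $[0,\tau_\Delta)$, giving $\inf\{t>0 : X_t \in [-K,K]^c\} \ge \tau_\Delta$ and contradicting \eqref{e:notstuck} (which holds simultaneously for all integers, hence for the a.s.\ finite $K$). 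Therefore $\tau_\Delta > T_0$ $\widehat{\P}$-a.s., and since $\{\tau_\Delta=\infty\} = \bigcap_{T_0 \in \N}\{\tau_\Delta > T_0\}$ is a countable intersection of full-measure events, \eqref{e:noexplosion} follows.

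The only delicate point is the strict positivity $\rho > 0$, and this is precisely where the hypotheses do their work: for a non-stationary environment the density of arrow-free columns in a fixed window could vanish and the walk could genuinely escape to infinity in finite time, so stationarity is essential, whereas ellipticity/boundedness of the rates plays no role. The remaining ingredients — local finiteness of $N^\pm_\omega$, the measurability needed to select $n_0$ and $m_0$, and the first-passage description of a nearest-neighbour crossing — are routine. The main conceptual step is the recognition that spatial ergodicity alone forces infinitely many arrow-free columns on each side of the origin, and that a single such column on each side suffices to trap the walk for time $T_0$.
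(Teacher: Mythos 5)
Your overall architecture --- use spatial ergodicity of the Poisson arrow processes to produce arrow-free ``blocking'' columns on both sides of the origin, read off confinement from the graphical construction, and contradict \eqref{e:notstuck} --- is exactly the paper's. But there is a genuine gap at the one point you dismiss as routine: the positivity of $\rho = \widehat{\E}\bigl[\exp\bigl(-\int_0^{T_0}\omega^+_s(0)\,\mathrm{d}s\bigr)\bigr]$ for a \emph{fixed} window height $T_0 \in \N$. This requires $\int_0^{T_0}\omega^+_s(0)\,\mathrm{d}s < \infty$ with positive probability, and nothing in (SE), \eqref{e:notstuck} or right-continuity provides it: the rates are only assumed right-continuous with values in $[0,\infty)$, and a right-continuous path need not be locally integrable (take $f(s)=n^3$ on $[1-\tfrac1n,1-\tfrac1{n+1})$). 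Your appeal to local finiteness of $N^\pm_\omega \in \mathcal{M}_p$ begs the question, since the Poisson measure is locally finite on a column exactly when the intensity $\int_0^{T_0}\omega^\pm_s(n)\,\mathrm{d}s$ is finite, which is what needs proving. One can construct space-time stationary, spatially i.i.d.\ environments in which a.s.\ \emph{every} column of height $1$ contains a non-integrable blow-up of the rates (per-site stationary renewal processes with inter-arrival times bounded by $1$, with $\omega^\pm_t(x)$ diverging non-integrably as $t$ approaches each renewal time); there $\mathcal{N}^\pm_n = \infty$ a.s.\ for every $n$ and every $T_0 \ge 1$, so $\rho = 0$, no blocking column of height $T_0$ exists, and your argument produces nothing --- while the hypotheses of the lemma are not violated (the embedded jump chain can be a recurrent nearest-neighbour chain making infinitely many steps before the explosion time, so \eqref{e:notstuck} can hold). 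That integrability of the rates is a genuine extra hypothesis, not a freebie, is visible in the paper itself: Proposition~\ref{prop:notstuck} assumes it explicitly as \eqref{e:nondeg1}, and even then only for one of the two signs. Your closing remark that ``ellipticity/boundedness of the rates plays no role'' is thus exactly backwards for this step.

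The paper's proof avoids the problem by shrinking the time window rather than fixing it: it shows $\P_0(\tau_\Delta \in (a,b]) = 0$ for every interval with $b-a \le \varepsilon$, where $\varepsilon$ is chosen so that $\widehat{\P}(A_0^{0,\varepsilon}) = \widehat{\E}\bigl[\exp\bigl\{-\int_0^\varepsilon(\omega^+_s(0)+\omega^-_s(0))\,\mathrm{d}s\bigr\}\bigr] > 0$. This positivity for small $\varepsilon$ is automatic: right-continuity at time $0$ forces $\int_0^\varepsilon(\omega^+_s(0)+\omega^-_s(0))\,\mathrm{d}s \to 0$ a.s.\ as $\varepsilon \downarrow 0$, and dominated convergence applies --- a soft fact requiring no integrability over macroscopic windows. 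Covering $[0,\infty)$ by countably many such short intervals then yields \eqref{e:noexplosion}. Your argument becomes correct essentially verbatim if you replace the fixed horizon $T_0$ by windows $(a,a+\varepsilon]$, confining the walk started from its (a.s.\ finite) position $X^0_a$ and noting that on $\{\tau_\Delta \in (a,a+\varepsilon]\}$ the walk must exit every $[-n,n]$ \emph{after} time $a$, since its range up to time $a$ is finite. (Your one-directional refinement --- blocking with right-arrows only on the right and left-arrows only on the left --- is correct and carries over.) Alternatively, your proof as written is valid under the additional assumption that the rates are a.s.\ locally integrable in time, which covers the uniformly elliptic case but not the lemma as stated.
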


The third lemma shows that, if there is a positive probability for the random walk to never touch $-1$ (resp.\ $1$),
then its range is bounded from below (resp.\ above).
\begin{lemma}\label{l:finmanyrexc}
Assume that (SE) and \eqref{e:notstuck} hold.
Then $\P_0(H_{-1} = \infty) >0$ implies $\P_0 \left( \exists \, z < 0 \colon\, H_z = \infty \right)=1$,
and $\P_0(H_{1} = \infty) >0$ implies $\P_0\left(\exists \, z > 0 \colon\, H_z = \infty \right)=1$.
\end{lemma}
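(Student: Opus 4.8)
The plan is to argue within the graphical construction of Section~\ref{s:construction}, using the coupled family $(X^x)_{x\in\Z}$ under $\widehat{\P}$ together with the monotonicity of Lemma~\ref{l:monot} and the ergodicity supplied by (SE). It suffices to prove the first implication, since the statement about $H_1$ and $z>0$ follows from the left--right symmetric argument (running the same reasoning for walks started at $a\ge 0$ and using $X^0_t\le X^a_t$).

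First I would introduce, for each $a\in\Z$, the event
\[
A_a := \{ X^a \text{ never hits } a-1 \} = \{ H_{a-1}(X^a) = \infty \},
\]
where $H_{a-1}(X^a)$ denotes the hitting time of $a-1$ by $X^a$. Because $X^a$ starts at $a$ and jumps only to nearest neighbours, on $A_a$ one has $X^a_t\ge a$ for all $t<\tau^a_\Delta$. Spatial stationarity of $\widehat{\P}$ then gives $\widehat{\P}(A_a)=\P_0(H_{-1}=\infty)=:p$, which is $>0$ by hypothesis; more precisely, the graphical construction yields $X^0(\theta^a(\omega,N^{\pm}_\omega))=X^a(\omega,N^{\pm}_\omega)-a$ pathwise, so that $\1{A_a}=\1{A_0}\circ\theta^a$, where $\theta^a$ is the spatial shift acting jointly on $(\omega,N^{\pm}_\omega)$.

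The deterministic core of the argument is to transfer $A_a$ to the walk $X^0$ by monotonicity. By Lemma~\ref{l:noexplosion} and spatial stationarity, $\tau^a_\Delta=\infty$ $\widehat{\P}$-a.s.\ for every $a$, so Lemma~\ref{l:monot} holds for all $t\ge 0$. For $a\le 0$ it gives $X^0_t\ge X^a_t$ for every $t$, hence on $A_a$ one has $X^0_t\ge a$ for all $t$, so $X^0$ never hits $a-1$. Since $a\le 0$ forces $a-1<0$, this yields the inclusion
\[
A_a \subseteq \{H_{a-1}(X^0)=\infty\} \subseteq \{\exists\, z<0 \colon\, H_z(X^0)=\infty\}, \qquad a\le 0.
\]
Restricting to $a\le 0$ is essential here: for $a>0$ the monotonicity inequality points the wrong way and $a-1$ need not be negative.

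It then remains to show $\widehat{\P}\big(\bigcup_{a\le 0}A_a\big)=1$, for which I would invoke ergodicity. The spatial translations form the cyclic group generated by $\theta^1$, so (SE), inherited by $(\omega,N^{\pm}_\omega)$ under $\widehat{\P}$, makes $\theta^{-1}$ an ergodic measure-preserving transformation. Birkhoff's ergodic theorem applied to $\1{A_0}$ along the orbit of $\theta^{-1}$ gives
\[
\frac1n \sum_{k=0}^{n-1} \1{A_{-k}} = \frac1n \sum_{k=0}^{n-1}\1{A_0}\circ\theta^{-k} \longrightarrow p>0 \quad \text{as } n\to\infty,\ \widehat{\P}\text{-a.s.},
\]
so $\widehat{\P}$-a.s.\ infinitely many of the events $(A_a)_{a\le 0}$ occur; in particular their union has full measure. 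Combined with the inclusion above, and using that the law of $(X^0,\omega)$ under $\widehat{\P}$ is $\P_0$, this gives $\P_0(\exists\, z<0 \colon\, H_z=\infty)=1$. I expect the main obstacle to be the careful bookkeeping in the second step: verifying the coupling identity $\1{A_a}=\1{A_0}\circ\theta^a$ and confirming that spatial stationarity and ergodicity genuinely pass from $\omega$ to the enriched pair $(\omega,N^{\pm}_\omega)$, so that Birkhoff's theorem is legitimately available on the joint space.
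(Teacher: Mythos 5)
Your proposal is correct and follows essentially the same route as the paper's proof: the same events $A_a=\{X^a_t\ge a\ \forall t\}$ (stated there as staying above $a$ rather than never hitting $a-1$), the same shift identity $A_a=\theta^a A_0$ feeding into Birkhoff's ergodic theorem along negative sites, and the same combination of Lemma~\ref{l:monot} with Lemma~\ref{l:noexplosion} to transfer $A_a$ to $X^0$. Your extra bookkeeping (ergodicity of $\theta^{-1}$ from (SE), and stationarity/ergodicity passing to the pair $(\omega,N^{\pm}_\omega)$) is sound and is in fact noted explicitly in Section~\ref{s:construction} of the paper.
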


Note that Lemmas~\ref{l:rightexcarefinite}--\ref{l:finmanyrexc} do not use assumption (EL) directly
but only its consequence \eqref{e:notstuck}.
Moreover, Lemma~\ref{l:rightexcarefinite} only uses stationarity in time and the strong Markov property;
the graphical construction of Section~\ref{s:construction} is only used in the proof of Lemmas~\ref{l:noexplosion} and~\ref{l:finmanyrexc}.

We can now finish the proof of Theorem~\ref{thm:01law}.
\begin{proof}[Proof of Theorem~\ref{thm:01law}]
Assumption (EL) and Lemmas~\ref{l:noexplosion}--\ref{l:finmanyrexc} together imply that
\begin{equation}\label{e:prfmainthm1}
\P_0(H_{-1}=\infty)>0 \;\; \Rightarrow \;\; \P_0 \left( \lim_{t\to \infty} X_t = \infty \right) = 1
\end{equation}
since, if the left-hand side of \eqref{e:prfmainthm1} holds, then $\liminf_{t \to \infty} X_t > - \infty$ a.s.\ and hence it must be equal to $\infty$
by (EL). Analogously,
\begin{equation}\label{e:prfmainthm2}
\P_0(H_{1}=\infty)>0 \;\; \Rightarrow \;\; \P_0 \left( \lim_{t\to \infty} X_t = -\infty \right) = 1.
\end{equation}
To conclude, we claim that
\begin{equation}\label{prmainthm3}
\P_0\left( H_1 \vee H_{-1} < \infty \right) = 1 \;\; \Rightarrow \;\; -\infty = \liminf_{t \to \infty} X_t < \limsup_{t \to \infty} X_t = \infty.
\end{equation}
Indeed, note that, by Lemmas~\ref{l:rightexcarefinite}--\ref{l:noexplosion}, $\P_0\left(H_{-1} < \infty \right) = 1$ implies that $\liminf_{t \to \infty} X_t \le -1$ a.s., which together with (EL) gives $\liminf_{t \to \infty} X_t = -\infty$.
The last equality is obtained analogously.
\end{proof}

\subsection{Proof of Lemma~\ref{l:rightexcarefinite}}
\label{ss:thetranslationinvariancelemma}

\begin{proof}
To start, we claim that, $\P_0$-a.s.,
\begin{equation}\label{e:samepropforallt}
P^{\theta_t \omega}_0 (H_{x} = \infty) = 0 \; \text{ simultaneously for all } t \ge 0.
\end{equation}
Indeed, for each fixed $t \ge 0$, $P^{\theta_t \omega}_0 (H_{x} = \infty) = 0$ a.s.\ since, by stationary,
$\P_0(\cdot) = \E_0[P^\omega_0(\cdot)] = \E_0[P^{\theta_t \omega}_0(\cdot)]$.
Hence \eqref{e:samepropforallt} holds with $t$ restricted to the set of rational numbers,
and to extend it to all $t\ge0$ we only need to show that
the function $t \mapsto P^{\theta_t \omega}_0(H_x = \infty)$ is right-continuous.
To this end, note that, since $\omega$ is right-continuous,
\begin{align}\label{e:prlemmaMKV1}
P^{\theta_t \omega}_0 \left(\exists \, u \in [0,s] \colon X_u \neq 0 \right)
& = 1-e^{-\int_t^{t+s} \left\{ \omega^+_u(0)+\omega^{-}_u(0) \right\} du} \nonumber\\
& \le \int_t^{t+s} \left\{ \omega^+_u(0)+\omega^{-}_u(0) \right\} du \nonumber\\
& \le 2 s \left\{ \omega^+_t(0) + \omega^-_t(0) \right\}
\end{align}
for all $s>0$ small enough (depending on $\omega$ and $t$).
Denoting by $O_\omega(s)$ a function whose modulus is bounded by $s \, C_\omega$
where $C_\omega \in (0,\infty)$ may depend on $\omega$, we obtain
\begin{align}\label{e:prlemmaMKV2}
P^{\theta_t \omega}_0 \left(H_x = \infty \right)
& = P^{\theta_t \omega}_0 \left(\Theta_s H_x = \infty, X_u = 0 \,\forall\, u \in [0,s] \right) + O_{\omega}(s) \nonumber\\
& = E^{\theta_t \omega}_0 \left[ \mathbbm{1}_{\{X_u = 0 \,\forall\, u \in [0,s] \}}P^{\theta_{t+s} \omega}_0 \left( H_x = \infty \right)\right] + O_{\omega}(s) \nonumber\\
& = P^{\theta_{t+s}\omega}_0 \left( H_x = \infty \right) + O_{\omega}(s),
\end{align}
where for the second line we use the Markov property and for the last one we again use \eqref{e:prlemmaMKV1}.
From this follows the desired right-continuity and consequently also \eqref{e:samepropforallt}.
By the strong Markov property (cf.\ the paragraph below \eqref{e:MPX}) and $\Theta_{T^{(k)}_0}H_x = \theta_{T^{(k)}_0}H_x$,
\begin{align}\label{e:prlemmaMKV3}
\P_0 \left( T^{(k)}_0 < \infty, \Theta_{T^{(k)}_0}H_x = \infty\right)
& = \E_0 \left[\int_0^\infty P^{\theta_t \omega}_0 \left( H_x = \infty \right) P^\omega_0 \left( T^{(k)}_0 \in d t \right) \right] = 0
\end{align}
by \eqref{e:samepropforallt}.
\end{proof}

\subsection{Proof of Lemmas~\ref{l:noexplosion}--\ref{l:finmanyrexc}}
\label{ss:the1dimlemma}

We start by showing that explosions are not possible under (SE) and \eqref{e:notstuck}.
\begin{proof}[Proof of Lemma~\ref{l:noexplosion}]
\text{}

\noindent
It is enough to show that, for any $a,b \in [0,\infty)$ with $b-a>0$ small enough,
\begin{equation}\label{e:prnoexpl1}
\P_0 \left( \tau_\Delta \in (a,b] \right) = 0.
\end{equation}
To that end, define the events
\begin{equation}\label{e:prnoexpl2}
A_x^{a,b} := \left\{ \text{there are no arrows in } \{x\} \times [a,b]\right\} = \left\{N^{\pm}_\omega\left(\{x\} \times [a,b]\right) = 0 \right\}
\end{equation}
and let $\varepsilon > 0$ be so small that, if $b-a \le \varepsilon$, then
\begin{equation}\label{e:prnoexpl3}
\widehat{\P} \left( A_0^{0,b-a}\right) \ge
\widehat{\P} \left( A_0^{0,\varepsilon} \right) = \widehat{\E} \left[\exp\left\{- \int_0^\varepsilon \left[ \omega^+_s(0)+\omega^-_s(0)\right]ds \right\} \right]> 0,
\end{equation}
which exists by the right-continuity of $\omega$ and the dominated convergence theorem.
Noting that $A_x^{a,b} = \theta^x A_0^{a,b}$,
we obtain from Birkhoff's ergodic theorem that, $\widehat{\P}$-a.s.,
\begin{equation}\label{e:prnoexpl4}
\lim_{N \to \infty} \frac{1}{N} \sum_{x =0}^{N-1} \mathbbm{1}_{A_{x}^{a,b}} = \widehat{\P} \left(A_0^{a,b} \right) = \widehat{\P} \left(A_0^{0,b-a} \right) >0
\end{equation}
by stationarity under time translations, and analogously for $x \le 0$.
In particular,
\begin{equation}\label{e:prnoexpl5}
\widehat{\P} \left( \forall\, z \in \Z, \;\exists\, x < z < y \text{ such that } A_x^{a,b} \text{ and } A_y^{a,b} \text{ occur}\right) =1.
\end{equation}
Note now that, by \eqref{e:notstuck},
if $\tau_\Delta \in (a,b]$ then for all $n \in \N$ the random walk exits the interval $[-n,n]$ before time $b$.
Therefore
\begin{equation}\label{e:prnoexpl6}
\P_0 \left( \tau_\Delta \in (a,b] \right) \le \widehat{\P} \left( \forall\, n \in \N, a+\Theta_a H_{[-n,n]^c} < b\right)
\end{equation}
where $H_{[-n,n]^c}$ is the hitting time of $\Z \setminus [-n,n]$ by $X^0$.
On the other hand, by the graphical construction,
if both $A_x^{a,b}$ and $A_y^{a,b}$ occur for some $x < X^0_a <  y$, then $a+\Theta_a H_{[-n,n]^c} \ge b$ with
$n = |x| \vee |y|$.
Hence \eqref{e:prnoexpl6} is at most
\begin{equation}\label{e: prnoexpl7}
\widehat{\P} \left( \forall\, x,y \in \Z \text{ such that } x< X^0_a < y, \text{ either } A_x^{a,b} \text{ or } A_y^{a,b} \text{ does not occur} \right)=0
\end{equation}
by \eqref{e:prnoexpl5}, proving \eqref{e:prnoexpl1}.
\end{proof}

We now prove Lemma~\ref{l:finmanyrexc}.
\begin{proof}[Proof of Lemma~\ref{l:finmanyrexc}]
We assume that $\P_0 (H_{-1} = \infty) >0$;
the case $\P_0 (H_1 = \infty) >0$ is proved analogously.
For $x \in \Z$, let
\begin{equation}\label{e:prlemma1D1}
A_x := \left\{ X^x_t \ge x \, \forall \, t \ge 0 \right\}.
\end{equation}
Since $A_x = \theta^x A_0$,
Birkhoff's ergodic theorem implies that, $\widehat{\P}$-a.s.,
\begin{equation}\label{e:prlemma1D2}
\lim_{N \to \infty} \frac{1}{N} \sum_{z =0}^{-N+1} \mathbbm{1}_{A_z} = \widehat{\P} \left(A_0 \right) = \P_0 \left( H_{-1} = \infty \right) >0,
\end{equation}
and in particular $\widehat{\P}(A_z \text{ occurs for some } z \le 0)=1$.
Noting that, by Lemmas~\ref{l:monot} and~\ref{l:noexplosion},
if $z \le 0$ and $A_z$ occurs then $X^0_t > z-1$ for all $t \ge 0$,
we obtain
\begin{align}\label{e:prlemma1D3}
\P_0 \left( \exists\, z < 0 \colon\, H_z = \infty \right)
& = \widehat{\P} \left( \exists \, z < 0 \colon\, X^0_t \neq z \,\forall\, t \ge 0 \right) \nonumber\\
& \ge \widehat{\P} \left( \exists \, z \le 0 \colon\, A_z \text{ occurs } \right) = 1,
\end{align}
finishing the proof.
\end{proof}

\section{Proof of Theorem~\ref{thm:examplesatisfiesEL}}
\label{s:proofofthmexamples}

We first show that \eqref{e:notstuck} holds for a very large class of models,
including our examples.
\begin{proposition}\label{prop:notstuck}
Assume that $\omega$ is stationary and ergodic with respect to the time translation $\theta_1$
and that, for a choice of $*,\star \in \{-,+\}$ and every $n \in \N$,
\begin{align}
& \P_0 \left( \int_0^{n} \omega^*_s(0) \, ds < \infty \right) = 1 \;\; \textnormal{ and }\label{e:nondeg1}\\
& \P_0 \left(\omega^{\star}_0(x) > 0 \;\forall\, x \in [-n,n] \right) >0. \label{e:nondeg2}
\end{align}
Then \eqref{e:notstuck} holds.
\end{proposition}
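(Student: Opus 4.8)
The plan is to argue inside the graphical construction of Section~\ref{s:construction}, showing that $X=X^0$ reaches $\Z\setminus[-n,n]$ after crossing only finitely many arrows. Since $X_t\in[-n,n]^c$ forces $t<\tau_\Delta$ (after $\tau_\Delta$ the walk sits at $\Delta\notin\Z$), the condition \eqref{e:notstuck} is equivalent to the statement that $X$ ever leaves the box at a finite time. I carry out the argument in the representative configuration $\star=+,\,*=-$, so that \eqref{e:nondeg2} provides positivity of the rightward rates throughout $[-n,n]$ and \eqref{e:nondeg1} provides integrability of the leftward rates at the origin; the configuration $\star=-,\,*=+$ is symmetric. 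The first step is to promote \eqref{e:nondeg1} from the origin to the whole box: invoking stationarity, $s\mapsto\omega^-_s(x)$ has a law not depending on $x$, whence $\int_0^T\omega^-_s(x)\,ds<\infty$ $\widehat{\P}$-a.s.\ for every $x\in[-n,n]$ and every finite $T$; equivalently, $N^-_\omega$ deposits only finitely many arrows in $[-n,n]\times[0,T]$.

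The heart of the matter is to preclude an explosion while the walk remains in $[-n,n]$. For this I would use a crossing count: as long as $X$ stays in the box, the number of rightward minus the number of leftward arrows it has crossed equals $X_t-X_0\in[-2n,2n]$, so the two counts differ by at most $2n$. Leftward crossings consume points of $N^-_\omega$ inside $[-n,n]\times[0,\tau_\Delta]$, which are a.s.\ finite whenever $\tau_\Delta<\infty$ by the box-wide integrability just obtained; hence on $\{\tau_\Delta<\infty\}$ the walk would cross only finitely many arrows in total, contradicting the definition \eqref{e:deftau_Delta} of $\tau_\Delta$. Therefore, as long as $X$ has not left $[-n,n]$ one has $\tau_\Delta=\infty$, and in particular $X$ is well defined and lies in the box at every integer time.

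It remains to force the exit. Using \eqref{e:nondeg2} and the right-continuity of $\omega$, the environment event that $\omega^+_s(x)>0$ for all $x\in[-n,n]$ and all $s$ in a right-neighbourhood of $0$ has positive probability; on it there is positive quenched probability that $N^+_\omega$ realises a left-to-right cascade of arrows across $[-n,n]$ while $N^-_\omega$ deposits no arrow in the box during the window, the positivity of this last factor being exactly where the box-wide finiteness of $N^-_\omega$ enters. Writing $G_k$ for the analogous event on $[k,k+1]$, space-time stationarity gives $\widehat{\P}(G_k)=\widehat{\P}(G_0)=:p>0$, and Birkhoff's theorem for the ergodic shift $\theta_1$ yields that $G_k$ occurs for infinitely many $k$ a.s. On $G_k$ any particle sitting in $[-n,n]$ at time $k$ can only move rightward and is carried to $n+1$ before time $k+1$ after at most $2n+1$ crossings (Lemma~\ref{l:monot} reduces this to the lowest particle). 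Combining the pieces: were $X$ to stay in $[-n,n]$ up to $\tau_\Delta$, the previous step would give $\tau_\Delta=\infty$ and place $X$ in the box at every integer time, so any $k$ with $G_k$ would eject it by time $k+1<\infty$, a contradiction; hence $X$ leaves the box after finitely many crossings, which is \eqref{e:notstuck}. I expect the no-explosion step to be the main obstacle, and within it the transfer of the origin-only bound \eqref{e:nondeg1} to the entire box via stationarity, since it is this box-wide local finiteness of $N^-_\omega$ that simultaneously rules out oscillatory explosions and endows the sweeping windows with positive probability.
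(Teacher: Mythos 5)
Your proposal is correct in its overall architecture but implements the decisive step by a genuinely different mechanism than the paper, and it leaves one case of the hypotheses uncovered. The paper works with \emph{environment-only} windows: using right-continuity and \eqref{e:nondeg2} it fixes $\delta,\varepsilon$ so that the events $\mathcal{A}_k=\{\omega^\star_{k+s}(x)\ge\delta\ \forall s\in[0,\varepsilon],\,x\in[-n,n]\}$ have positive probability, obtains infinitely many such windows via Poincar\'e recurrence and $\theta_1$-ergodicity applied to $\omega$ alone, then asserts a \emph{deterministic} lower bound $\vartheta_n$ on the quenched exit probability within a window (see \eqref{e:prnotstuck5}) and concludes by a Markov-property induction \eqref{e:prnotstuck6}. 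You instead push the good event into the Poisson layer: on $G_k$ (rightward cascade of $N^+_\omega$-arrows, no $N^-_\omega$-arrow in the box) the exit is \emph{sure} by a deterministic sweep, so no quenched estimate uniform in $\omega$ is needed --- only $\widehat{\P}(G_0)>0$ --- which is a genuine advantage given that the opposing rates are unbounded above; indeed your insistence on silencing the opposing arrows is what makes the exit step honest where the paper's uniform $\vartheta_n$ is asserted rather tersely. The price is that Birkhoff must be applied to the \emph{joint} system $(\omega,N^{\pm}_\omega)$ under the time shift, an ergodicity inheritance the paper states only for spatial shifts in Section~\ref{s:construction}, though it holds for time shifts by the same standard argument for Cox processes. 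Your crossing-count proof of non-explosion inside the box (the two crossing counts differ by at most $2n$) is exactly the content behind the paper's one-line claim that $N^{*}_\omega([-n,n]\times[0,T])<\infty$ makes $H_{[-n,n]^c}=\infty$ imply $\tau_\Delta=\infty$; you spell it out, and like the paper you tacitly use spatial stationarity to propagate \eqref{e:nondeg1} from the origin to the whole box, which is harmless since the proposition is applied under (SE).

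The gap: the proposition permits \emph{any} choice of the pair $(*,\star)$, including $*=\star$, while you treat only $(\star,*)=(+,-)$ and its mirror image. If $*=\star=+$, then \eqref{e:nondeg1} controls $\int_0^n\omega^+_s(0)\,ds$ --- which still feeds your crossing-count step, since that step is insensitive to which direction is integrable --- but nothing whatsoever controls $\int\omega^-$. The ``no leftward arrows in the box during the window'' factor of $G_k$ then has quenched probability
\begin{equation*}
\exp\Bigl\{-\int_k^{k+\varepsilon}\sum_{|x|\le n}\omega^-_s(x)\,ds\Bigr\},
\end{equation*}
which need not be positive, so the positivity of $\widehat{\P}(G_0)$ --- which you correctly identify as the linchpin --- is no longer guaranteed. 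The paper's scheme is arranged precisely so that the opposing direction never has to be silenced: both $\mathcal{A}_k$ and the exit estimate \eqref{e:prnotstuck5} involve only the lower bound on $\omega^\star$. To cover $*=\star$ you would need either to repair $G_k$ (demanding only that no opposing arrow interferes with the sweeping cascade still requires some local integrability of $\omega^{-\star}$ over the box) or to revert to a quenched-probability induction in the style of \eqref{e:prnotstuck6}.
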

\begin{proof}
Fix $n \in \N$.
By right-continuity and invariance under time translations,
there exist $\delta, \varepsilon \in (0,1)$ such that the events
\begin{equation}\label{e:prnotstuck1}
\mathcal{A}_k := \left\{\omega^\star_{k+s}(x) \ge \delta \;\forall\, s \in [0,\varepsilon], x \in [-n,n] \right\}, \;\; k \in \N
\end{equation}
have equal and positive probability.
Then the event
\begin{equation}\label{e:prnotstuck2}
\mathcal{A} := \limsup_{k \to \infty} \mathcal{A}_k = \bigcap_{k \ge 1} \bigcup_{l \ge k} \mathcal{A}_k = \left\{ \mathcal{A}_k \text{ occurs for infinitely many } k \in \N\right\}
\end{equation}
also has positive probability by the Poincar\'e recurrence theorem (cf.\ Theorem 1.4 in \cite{Wa82});
moreover, since $\mathcal{A}$ is invariant under $\theta_1$, it occurs almost surely by ergodicity.
Let
\begin{equation}\label{e:prnotstuck3}
\begin{array}{lcl}
V_1 &:= & \inf\left\{l \in \N \colon\, \mathcal{A}_l \text{ occurs}\right\},\\
V_{k+1} & := & \inf\left\{l > V_k \colon\, \mathcal{A}_l \text{ occurs}\right\}, \; k \ge 1.
\end{array}
\end{equation}
Denote by $H_{[-n,n]^c}$ the hitting time of $\Z \setminus [-n,n]$.
By \eqref{e:nondeg1}, $N^{*}_\omega([-n,n] \times [0,T])<\infty$ a.s.\ for all $T \ge 0$,
and thus $H_{[-n,n]^c} = \infty$ implies $\tau_{\Delta} = \infty$ almost surely.
Hence
\begin{align}\label{e:prnotstuck4}
\P_0 \left( H_{[-n,n]^c} = \infty \right)
& \le \P_0 \left( X_{V_k} \in [-n,n], \Theta_{V_k} H_{[-n,n]^c} > \varepsilon \; \forall \, k \ge 1\right) \nonumber\\
& = \lim_{L \to \infty} \P_0 \left( X_{V_k} \in [-n,n], \Theta_{V_k} H_{[-n,n]^c} > \varepsilon \; \forall \, 1 \le k \le L \right).
\end{align}
Note now that, if $X_{V_k} \in [-n,n]$, then between times $V_k$ and $V_k + \varepsilon \wedge \Theta_{V_k} H_{[-n,n]^c}$
the RWDRE has a rate at least $\delta$ to jump in direction $\star$. Therefore,
\begin{equation}\label{e:prnotstuck5}
X_{V_k} \in [-n,n] \; \Rightarrow \; P_{X_{V_k}}^{\theta_{V_k}\omega} \left(H_{[-n,n]^c} \le \varepsilon  \right) \ge \vartheta_n
\end{equation}
for some deterministic $\vartheta_n \in(0,1)$ independent of $k$.
By the Markov property,
\begin{align}\label{e:prnotstuck6}
& \P_0 \left( X_{V_k} \in [-n,n], \Theta_{V_k} H_{[-n,n]^c} > \varepsilon \; \forall \, 1 \le k \le L+1 \right) \nonumber\\
= \; &  \E_0 \left[\mathbbm{1}_{\{ X_{V_k} \in [-n,n], \Theta_{V_k} H_{[-n,n]^c} > \varepsilon \; \forall \, 1 \le k \le L \}} P^{\theta_{V_L} \omega}_{X_{V_L}} \left( H_{[-n,n]^c} > \varepsilon \right)\right] \nonumber\\
 \le \; &(1-\vartheta_n) \P_0 \left( X_{V_k} \in [-n,n], \Theta_{V_k} H_{[-n,n]} > \varepsilon \; \forall \, 1 \le k \le L \right)
\end{align}
and we conclude using induction that \eqref{e:prnotstuck4} is equal to zero, proving \eqref{e:notstuck}.
\end{proof}

As a consequence, no explosion can occur in our examples.
\begin{corollary}\label{cor:noexpexamples}
In all models defined in Example~\ref{ex:mainexample},
 $\tau_\Delta = \infty$ almost surely.
\end{corollary}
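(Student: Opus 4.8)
The plan is to read off the claim from the two results already available: Proposition~\ref{prop:notstuck} and Lemma~\ref{l:noexplosion}. The models of Example~\ref{ex:mainexample} satisfy (SE), so it suffices to check the hypotheses of Proposition~\ref{prop:notstuck}; that proposition delivers \eqref{e:notstuck}, after which Lemma~\ref{l:noexplosion}, whose hypotheses (SE) and \eqref{e:notstuck} are then both in force, yields $\tau_\Delta = \infty$ $\P_0$-almost surely. Thus the entire task reduces to verifying the assumptions of Proposition~\ref{prop:notstuck}, namely stationarity and ergodicity of $\omega$ under the time shift $\theta_1$ together with the nondegeneracy conditions \eqref{e:nondeg1} and \eqref{e:nondeg2}.

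The easy hypotheses I would dispatch first. Stationarity of $\omega$ under $\theta_1$ is immediate: each coordinate $(\eta_t(x))_{t \ge 0}$ is started from its invariant law $\pi$ and is therefore time-stationary, and the coordinates are independent over $x$. For the nondegeneracy conditions, both hold for either choice of sign. Condition \eqref{e:nondeg2} is in fact deterministic, since $\alpha^\star:E \to (0,\infty)$ forces $\omega^\star_0(x) = \alpha^\star(\eta_0(x)) > 0$ for every $x$, so the probability in \eqref{e:nondeg2} equals $1$. For \eqref{e:nondeg1}, I would use that an irreducible positive-recurrent continuous-time chain is non-explosive and hence makes only finitely many jumps on $[0,n]$; consequently $(\eta_s(0))_{0 \le s \le n}$ visits finitely many states $\P_0$-a.s., and as $\alpha^*$ is finite-valued, $\int_0^n \alpha^*(\eta_s(0))\,ds$ is a finite sum of finite sojourn contributions, hence finite.

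The crux, and the step I expect to require the most care, is ergodicity of $\omega$ under $\theta_1$, which is \emph{not} part of (SE) (that assumption only provides spatial ergodicity). I would establish the stronger property that $\omega$ is mixing under $\theta_1$, routing through mixing rather than ergodicity precisely because the reduction from the time flow to the single map $\theta_1$ is transparent for mixing but not for ergodicity. The key input is convergence to equilibrium for a single coordinate: $P_t(i,j) \to \pi(j)$ as $t \to \infty$ for all $i,j \in E$. Hence for bounded $\phi,\psi:E \to \R$,
\[
\E\big[\phi(\eta_0(x))\,\psi(\eta_t(x))\big] = \sum_{i,j} \pi(i)\,\phi(i)\,P_t(i,j)\,\psi(j) \longrightarrow \E[\phi]\,\E[\psi]
\]
by dominated convergence, and a standard argument via the Markov property upgrades this to mixing of the full path $(\eta_s(x))_{s \ge 0}$ under time shifts; restricting to integer times gives mixing of each coordinate under $\theta_1$. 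To pass to the whole field I would test mixing on cylinder functions depending on finitely many sites: by independence across $x$ the correlation $\E[f\,(g\circ\theta_1^{n})]$ factorizes over the finitely many relevant coordinates, each factor converging by the single-coordinate mixing just obtained, so $\E[f\,(g\circ\theta_1^{n})] \to \E[f]\,\E[g]$; density of cylinder functions then gives mixing of the field, and since $\omega$ is a coordinatewise (time-shift-commuting) deterministic function of $\eta$, mixing, hence ergodicity, transfers to $\omega$.

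With all hypotheses of Proposition~\ref{prop:notstuck} verified, \eqref{e:notstuck} holds, and Lemma~\ref{l:noexplosion} (using that (SE) holds for the example) completes the proof. The only genuinely delicate point is the time-ergodicity above: one must guard against assuming that ergodicity of the continuous-time flow automatically descends to the time-one map, which is why the argument is carried out at the level of mixing, where the restriction to integer times is automatic.
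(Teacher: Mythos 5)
Your proposal is correct and takes essentially the same route as the paper: the paper's proof of this corollary likewise consists of reducing the claim to Proposition~\ref{prop:notstuck} followed by Lemma~\ref{l:noexplosion}, with the hypotheses of the proposition justified by the one-line observation that the models of Example~\ref{ex:mainexample} are mixing in time. The only difference is that you spell out the verification the paper leaves implicit --- stationarity, both nondegeneracy conditions \eqref{e:nondeg1}--\eqref{e:nondeg2}, and ergodicity under $\theta_1$ obtained via coordinatewise mixing plus independence across sites, including the genuinely relevant caveat that ergodicity of the continuous-time flow need not descend to the time-one map, whereas mixing does.
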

\begin{proof}
The models described are mixing in time, and thus satisfy the hypotheses of Proposition~\ref{prop:notstuck}.
Since they also satisfy (SE), the corollary follows by Lemma~\ref{l:noexplosion}.
\end{proof}

We can now finish the proof of Theorem~\ref{thm:examplesatisfiesEL}.
\begin{proof}[Proof of Theorem~\ref{thm:examplesatisfiesEL}]
We will prove that, $\P_0$-a.s.,
\begin{equation}\label{e:prexamples1}
\forall \; x \in \Z, \;\; T^{(k)}_x < \infty \;\; \forall \; k \ge 1  \; \Rightarrow \; T^{(k)}_{x-1} < \infty \; \; \forall \; k \ge 1.
\end{equation}
The analogous result for $x+1$ in place of $x-1$ then follows by reflection.
This implies (EL) since, by Proposition~\ref{prop:notstuck}, \eqref{e:notstuck} holds.
We claim that it suffices to show that, a.s.,
\begin{equation}\label{e:prexamples2}
\forall \; x \in \Z, \;\; T^{(k)}_x < \infty \;\; \forall \; k \ge 1  \; \Rightarrow \; T^{(1)}_{x-1} < \infty.
\end{equation}
Indeed, fix $j \in \N$.
Suppose by induction that \eqref{e:prexamples2} holds with $(1)$ substituted by $(j)$.
Then write, using the strong Markov property,
\begin{align}\label{e:prexamples3}
& \P_0 \left( T^{(k)}_x < \infty \; \forall \; k \ge 1, T^{(j+1)}_{x-1} = \infty \right) \nonumber\\
= \; & \P_0 \left( \Theta_{T^{(j)}_{x-1}} T^{(k)}_x < \infty \; \forall \; k \ge 1, T^{(j)}_{x-1} < \infty, \Theta_{T^{(j)}_{x-1}} T^{(1)}_{x-1} = \infty \right) \nonumber\\
= \; & \E_0 \left[ \int_0^{\infty} P^{\theta_t \omega}_{x-1} \left( T^{(k)}_x < \infty \; \forall \; k \ge 1,  T^{(1)}_{x-1} = \infty \right) P^\omega_0 \left(T^{(j)}_{x-1} \in d t \right)\right].
\end{align}
With an argument identical to the one used to prove \eqref{e:samepropforallt},
we can show that the integrand in \eqref{e:prexamples3} is a.s.\ identically equal to $0$, proving \eqref{e:prexamples1}.

Fix $\mathcal{O} \in E$ and choose a finite set $E^* \subset E$ such that $\mathcal{O} \in E^*$ and
\begin{equation}\label{e:prexamples5}
\inf_{t \ge 0} \P_0 \left( \eta_t(0) \in E^*   \,\middle|\, \eta_0(0) = \mathcal{O} \right) > \tfrac12.
\end{equation}
This is possible since $\eta_t(0)$ converges in distribution to $\pi$ (cf.\ Theorem 2.66 in \cite{Li10}).
Considering the maximal jump rate in $E^*$,
we further obtain $\varepsilon>0$ such that
\begin{equation}\label{e:prexamples6}
\inf_{t \ge 0} \P_0 \left( \eta_s(0) \in E^*  \,\forall \, s \in [t,t+\varepsilon]  \,\middle|\, \eta_0(0) = \mathcal{O} \right) > \tfrac12.
\end{equation}

Fix now a site $x \in \Z$ and define
\begin{equation}\label{e:prexamples7}
\begin{array}{lcl}
U_1 & := & \inf \{t > 0 \colon\, \eta_t(x) = \mathcal{O}\},\\
V_1 & := & \inf \{t > U_1 \colon\, X_t = x \} =  U_1 + \Theta_{U_1}T^{(1)}_x,
\end{array}
\end{equation}
and, recursively for $k \ge 2$,
\begin{equation}\label{e:prexamples8}
\begin{array}{lcl}
U_{k} & := & \left\{ \begin{array}{ll} \inf \{t > V_{k-1}+1 \colon\, \eta_t(x) = \mathcal{O}\} & \qquad \quad \; \text{ if } V_{k-1} < \infty, \\
                                      \infty & \qquad \quad \; \text{ otherwise,} \end{array}\right. \\
V_{k} & := & \left\{ \begin{array}{ll} \inf \{t > U_k \colon\, X_t = x \} =  U_k + \Theta_{U_k}T^{(1)}_x & \text{ if } U_{k} < \infty, \\
                                      \infty & \text{ otherwise.}
\end{array}\right.
\end{array}
\end{equation}
These are all well-defined by Corollary~\ref{cor:noexpexamples}.
Note that $T^{(k)}_x< \infty$ for all $k \ge 1$ if and only if $V_k < \infty$ for all $k \ge 1$.
Therefore, it is enough to show that
\begin{equation}\label{e:prexamples9}
\P_0 \left( V_k < \infty , \Theta_{V_i} T^{(1)}_{x-1} \ge 1 \;\forall\, 1 \le i \le k \right) \le \rho^{k-1}
\end{equation}
for some $\rho \in (0,1)$.
To this end, note first that
\begin{equation}\label{e:prexamples10}
\eta_{t+s}(x) \in E^* \;\forall\,  s \in [0,\varepsilon] \; \Rightarrow \; P_x^{\theta_t \omega} \left( T^{(1)}_{x-1} \le \varepsilon \right) \ge \delta
\end{equation}
for some deterministic $\delta> 0$, since the left-hand side of \eqref{e:prexamples10} implies that $\omega^{\pm}_{t+s}(x)$ is bounded away from zero and infinity uniformly in $s \in [0,\varepsilon]$.
Therefore, for any initial configuration $\bar{\eta} \in E^{\Z}$ and any $z \in \Z$,
\begin{align}\label{e:prexamples11}
& \P_z \left( V_1 < \infty, \Theta_{V_1}T^{(1)}_{x-1} < 1 \,\middle|\, \eta_0 = \bar{\eta} \right) \nonumber\\
\ge \; &  \P_z \left(  V_1 < \infty, \eta_{V_1+s}(x) \in E^* \;\forall\, s \in [0,\varepsilon], \Theta_{V_1} T^{(1)}_{x-1} \le \varepsilon \,\middle|\, \eta_0 = \bar{\eta} \right) \nonumber\\
= \; & \E_z \left[  \mathbbm{1} \{ V_1 < \infty, \eta_{V_1+s}(x) \in E^* \;\forall\, s \in [0,\varepsilon] \} P_x^{\theta_{V_1} \omega} \left( T^{(1)}_{x-1} \le \varepsilon \right) \,\middle|\, \eta_0 = \bar{\eta} \right]\nonumber\\
\ge \; & \delta \, \P_z \left(  V_1 < \infty, \eta_{V_1+s}(x) \in E^* \;\forall\, s \in [0,\varepsilon] \,\middle|\, \eta_0 = \bar{\eta}\right) \nonumber\\
= \; & \delta \, \P_z \left( U_1 \le V_1 < \infty, \eta_{U_1+(V_1 -U_1) + s}(x) \in E^* \;\forall\, s \in [0,\varepsilon] \,\middle|\, \eta_0 = \bar{\eta}\right).
\end{align}
Note now that $U_1$, $V_1$ are measurable in
\[
\sigma \left( N^{\pm}_\omega(C) \colon\, C \subset \{x\} \times [0,U_1] \cup (\Z\setminus\{x\}) \times [0,\infty) \right),
\]
while $(\eta_{U_1+s}(x))_{s \ge 0}$ is independent of the latter sigma-algebra with distribution equal to that of $(\eta_s(x))_{s \ge 0}$ with $\eta_0(x) = \mathcal{O}$.
Therefore, \eqref{e:prexamples11} equals
\begin{align}\label{e:prexamples20}
& \delta \int \hspace{-7pt} \int_{0<u\le v < \infty} \hspace{-9pt} \P_z \left( \cap_{s \in [0,\varepsilon]}\{ \eta_{v-u + s}(x) \in E^*\} \,\middle|\, \eta_0(x) = \mathcal{O}\right) \P_z \left(V_1 \in dv, U_1 \in du \,\middle|\, \eta_0 = \bar{\eta} \right) \nonumber\\
& \ge \frac{\delta}{2} \P_z \left( V_1 < \infty \,\middle|\, \eta_0 = \bar{\eta} \right)
\end{align}
by \eqref{e:prexamples6}, implying that, for any $\bar{\eta} \in E^{\Z}$ and any $z \in \Z$,
\begin{equation}\label{e:prexamples13}
\P_z \left( V_1 < \infty, \Theta_{V_1} T^{(1)}_{x-1} \ge 1 \,\middle|\, \eta_0 = \bar{\eta} \right) \le \rho := 1 - \frac{\delta}{2} < 1.
\end{equation}
To conclude, use the strong Markov property of $(X_t, \eta_t)$ at time $V_k+1$ to write
\begin{align}\label{e:prexamples14}
& \P_0 \left( V_{k+1} < \infty, \Theta_{V_i} T^{(1)}_{x-1} \ge 1 \;\forall\, 1 \le i \le k+1\right) \nonumber\\
= \; & \E_0 \left[ \mathbbm{1}_{\{V_{k} < \infty, \Theta_{V_i} T^{(1)}_{x-1} \ge 1 \;\forall\, 1 \le i \le k\}} \P_{X_{V_k+1}} \left( V_1 < \infty, \Theta_{V_1} T^{(1)}_{x-1} \ge 1 \,\middle|\, \eta_0 = \bar{\eta}\right)_{\bar{\eta} = \eta_{V_k+1}}\right] \nonumber\\
\le \; & \rho \, \P_0 \left( V_{k} < \infty, \Theta_{V_i} T^{(1)}_{x-1} \ge 1 \;\forall\, 1 \le i \le k\right)
\end{align}
by \eqref{e:prexamples13}, and so \eqref{e:prexamples9} follows by induction.
\end{proof}

\bigskip
\noindent
\textbf{Acknowledgments.}
The authors would like to thank Luca Avena for the suggestion to add Corollary~2.2,
and the anonymous referee for useful comments. 
The work of TO was supported by the Labex Milyon (ANR-10-LABX-0070) of Universit\'e de Lyon, within the program "Investissements d'Avenir" (ANR-11-IDEX-0007) operated by the French National Research Agency (ANR).
RSdS was supported by the German DFG project KO 2205/13-1 ``Random mass flow through random potential''.


\end{document}